\documentclass[smallextended,referee,envcountsect,]{svjour3}
\smartqed

\newtheorem{assumption}{Assumption}[section]
\usepackage{amsmath,amssymb,mathtools,bm}
\usepackage{hyperref,lmodern,xcolor,subcaption}
\usepackage{float,booktabs,url,cite}
\usepackage{graphicx,tabularx}
\usepackage{algorithm,algorithmic}

\begin{document}

\allowdisplaybreaks

\newcommand{\argmax}{\operatornamewithlimits{argmax}}
\newcommand{\argmin}{\operatornamewithlimits{argmin}}

\title{Mini-Batch Stochastic Krasnosel'ski\u\i-Mann Algorithm for Nonexpansive Fixed Point Problems}

\author{Hideaki Iiduka}

\institute{Hideaki Iiduka \at
             Meiji University \\
              1-1-1 Higashimita, Tama-ku, Kawasaki-shi, Kanagawa 214-8571 Japan\\
              iiduka@cs.meiji.ac.jp
}

\date{Received: date / Accepted: date}

\maketitle

\begin{abstract}
The Krasnosel'ski\u\i-Mann algorithm is a well-known method for finding fixed points of a nonexpansive mapping with strong theoretical guarantees. However, there are practical large-scale problems to which this algorithm cannot be applied. Here, to resolve the issue caused by the computational difficulty of the mapping, we define a computable mini-batch stochastic mapping, which is a unbiased estimator of the nonexpansive mapping, and implement it in the Krasnosel'ski\u\i-Mann algorithm. We show that the algorithm with increasing batch sizes converges almost surely to a fixed point of the nonexpansive mapping. We also perform a convergence rate analysis on the algorithm.
\end{abstract}

\keywords{Krasnosel'ski\u\i-Mann algorithm \and Mini-batch stochastic Krasnosel'ski\u\i-Mann algorithm \and Nonexpansive fixed point problem}
\subclass{47J26 \and 65K05 \and 90C15}

\section{Introduction}
\subsection{Nonexpansive fixed point problem}
Fixed point problems \cite{goebel1,goebel2,b-c,takahashi} constitute a fundamental class of mathematical problems with wide-ranging applications. In particular, the {\em nonexpansive fixed point problem} is generally one of finding the fixed points of a nonexpansive mapping, and it includes significant problems, such as convex feasibility problems, monotone variational inequality problems, and convex minimization problems.

Many iterative algorithms for solving nonexpansive fixed point problems have been developed (see, e.g., \cite{berinde} for fixed point approximation algorithms); one especially useful method is the Krasnosel'ski\u\i-Mann algorithm \cite{kra,mann}. Here, let $T \colon \mathbb{R}^d \to \mathbb{R}^d$ be nonexpansive (see Assumption \ref{assum:1}(A1) for the definition of nonexpansivity). When the current point is the $k$-th approximate point $\bm{x}_k$, the Krasnosel'ski\u\i-Mann algorithm updates its iterates in the direction: $\bm{d}_k = T (\bm{x}_k) - \bm{x}_k$ with a step size $\alpha_k \in (0,1)$, i.e., 
\begin{align}\label{Kras_Mann}
\bm{x}_{k+1} 
= \bm{x}_k + \alpha_k \bm{d}_k
= \bm{x}_k + \alpha_k (T (\bm{x}_k) - \bm{x}_k)
= (1 - \alpha_k) \bm{x}_k + \alpha_k T (\bm{x}_k).
\end{align}
In theory, convergence of the Krasnosel'ski\u\i-Mann algorithm \eqref{Kras_Mann} to a fixed point of $T$ is guaranteed if $\alpha_k$ satisfies $\sum_{k=0}^{+ \infty} \alpha_k (1 - \alpha_k) = + \infty$ \cite[Theorem 5.15]{b-c}, \cite[Corollaries 1-3]{GROETSCH1972369}. Moreover, the algorithm satisfies that $\| \bm{x}_K - T (\bm{x}_K) \| = O(1/\sqrt{\sum_{k=0}^{K-1} \alpha_k (1 - \alpha_k)})$ \cite[Theorem 1]{Cominetti:2014aa}. Hence, by using a constant step size $\alpha_k = \alpha \in (0,1)$, the Krasnosel'ski\u\i-Mann algorithm converges with an $O(1/\sqrt{K})$ convergence rate more quickly than when using a diminishing step size (see also Section \ref{sec:3.2.1}).

Meanwhile, in practice, the computation of the Krasnosel'ski\u\i-Mann algorithm \eqref{Kras_Mann} strongly depends on the computability of $T$. For example, let us consider a minimization problem that is to find a global minimizer of the sum of many convex functions, denoted by $f \coloneqq \sum_{i=1}^n f_i$. Algorithm \eqref{Kras_Mann} with 
\begin{align}\label{many_convex}
T (\bm{x})  
\coloneqq \bm{x} - \beta \nabla f (\bm{x})
= \bm{x} - \frac{\beta}{n} \sum_{i=1}^n \nabla f_i (\bm{x})
= \frac{1}{n} \sum_{i=1}^n \underbrace{\left( \bm{x} - \beta \nabla f_i (\bm{x})   \right)}_{T_i (\bm{x})}
\end{align}
converges to a global minimizer of $f$ under sufficient conditions of $\beta$ for the nonexpansivity of $T$ (see Example \ref{exp:1}(iii)). However, since practical convex minimization problems have large $n$ and $d$, the computation of $T$ is expensive. Accordingly, it is not necessarily true that the Krasnosel'ski\u\i-Mann algorithm \eqref{Kras_Mann} can be applied to practical problems. 

\subsection{Stochastic Krasnosel'ski\u\i-Mann algorithms}
Here, we suppose that $T$ consists of a finite number of $T_1, \cdots, T_n$ (see also $T$ in the above subsection). When $T$ cannot be used at each iteration, the following stochastic nonexpansive mapping can be defined for all $\bm{x} \in \mathbb{R}^d$:
\begin{align*}
T_{\bm{\xi}} (\bm{x}) \coloneqq
\begin{dcases}
T (\bm{x}) + \bm{\xi} &\text{ (noisy observation)} \\
\frac{1}{b} \sum_{i=1}^b T_{\xi_i} (\bm{x}) &\text{ (mini-batch estimation)},
\end{dcases}
\end{align*}
where, in the ``noisy observation" case, the noise $\bm{\xi}$ of $T(\bm{x})$ is a random variable such that the expectation of $\bm{\xi}$ is $0$, i.e., $\mathbb{E}_{\bm{\xi}}[\bm{\xi}] = \bm{0}$, in the ``mini-batch estimation" case, $b$ $(\leq n)$ is the batch size (the number of samples), $\bm{\xi} = (\xi_1, \cdots, \xi_b)^\top$ comprises $b$ independent and identically distributed (i.i.d.) variables, and $T_{\xi_i} (\bm{x})$ is a unbiased estimator of $T$, i.e., $\mathbb{E}_{\xi_i}[T_{\xi_i} (\bm{x})] = T(\bm{x})$. Since 
\begin{align*}
\mathbb{E}_{\bm{\xi}} \left[ T_{\bm{\xi}} (\bm{x}) \right]
=
\begin{dcases}
T (\bm{x}) + \mathbb{E}_{\bm{\xi}}[\bm{\xi}] = T (\bm{x}) &\text{(noisy observation)} \\
\mathbb{E}_{\xi_i} \left[ \frac{1}{b} \sum_{i=1}^b T_{\xi_i} (\bm{x}) \right] = T (\bm{x})  &\text{(mini-batch estimation)},
\end{dcases}
\end{align*}
we expect that upon replacing $T$ with $T_{\bm{\xi}}$, algorithm \eqref{Kras_Mann} will converge to a fixed point of $T$. In fact, \cite[Theorem 2.5]{doi:10.1137/22M1515550} and \cite[Corollary 2.7]{doi:10.1137/140971233} showed that the stochastic Krasnoselski\u\i-Mann algorithm in the noisy observation case defined for all $k \in \{0\} \cup \mathbb{N}$ by 
\begin{align*}
\bm{x}_{k+1} 
= (1 - \alpha_k) \bm{x}_k + \alpha_k T_{\bm{\xi}_k} (\bm{x}_k)
= (1 - \alpha_k) \bm{x}_k + \alpha_k ( T (\bm{x}_k) + \bm{\xi}_k)
\end{align*}
converges almost surely to a fixed point of $T$ under certain conditions.

Mini-batch estimation is often used in practice. For example, let us consider empirical risk minimization in machine learning. Let $\bm{x} \in \mathbb{R}^{d}$ be a parameter of a deep neural network, $S = \{(\bm{d}_1,\bm{y}_1), \ldots, (\bm{d}_n,\bm{y}_n)\}$ be the training set, where each data point $\bm{d}_i$ is associated with a label $\bm{y}_i$, and let $f_i (\cdot) \coloneqq f(\cdot;(\bm{d}_i,\bm{y}_i)) \colon \mathbb{R}^{d} \to \mathbb{R}$ be the loss function corresponding to the $i$-th labeled training data $(\bm{d}_i,\bm{y}_i)$. Empirical risk minimization minimizes the empirical risk defined for all $\bm{x} \in \mathbb{R}^{d}$ as $f (\bm{x}) \coloneqq (1/n) \sum_{i=1}^n f_i(\bm{x})$. In general, the number $n$ of training data and the number of dimensions $d$ of the deep neural network are very large, and we cannot use $\nabla f(\bm{x}) = (1/n) \sum_{i=1}^n \nabla f_i(\bm{x})$ directly to minimize $f$ (see also \eqref{many_convex}). Here, instead of $\nabla f (\bm{x})$, we can use the mini-batch stochastic gradient of $f$ defined by 
\begin{align*}
\nabla f_{\bm{\xi}} (\bm{x}) \coloneqq \frac{1}{b} \sum_{i=1}^b \nabla f_{\xi_i} (\bm{x}).
\end{align*}
Many optimizers using the mini-batch stochastic gradient have been presented to minimize $f$. For example, mini-batch stochastic gradient descent (SGD) defined by 
\begin{align*}
\bm{x}_{k+1} 
= \bm{x}_k - \eta \nabla f_{\bm{\xi}_k} (\bm{x}_k)
= \bm{x}_k - \frac{\eta}{b} \sum_{i=1}^b \nabla f_{\xi_{k,i}} (\bm{x}_k)
= \frac{1}{b} \sum_{i=1}^b \underbrace{\left( \bm{x}_k - \eta \nabla f_{\xi_{k,i}} (\bm{x}_k)   \right)}_{T_{\xi_{k,i}} (\bm{x}_k)}
\end{align*}
can minimize $f$ under certain conditions \cite[Section 5]{garrigos2024handbookconvergencetheoremsstochastic}, where $\eta > 0$ and $\xi_{k,i}$ is a random variable generated by the $i$-th sampling in the $k$-th iteration. The motivation behind this work is thus to show that the stochastic Krasnoselski\u\i-Mann algorithm converges in the mini-batch estimation case.

\subsection{Mini-batch stochastic Krasnosel'ski\u\i-Mann algorithm}
This paper considers algorithm \eqref{Kras_Mann} in the setting that replaces $T$ with the mini-batch stochastic mapping $T_{\bm{\xi}_k} \coloneqq (1/b_k) \sum_{i=1}^{b_k} T_{\xi_{k,i}}$, i.e., 
\begin{align}\label{mini_batch_KM}
\bm{x}_{k+1} 
= (1 - \alpha_k) \bm{x}_k + \alpha_k T_{\bm{\xi}_k} (\bm{x}_k)
= (1 - \alpha_k) \bm{x}_k + \frac{\alpha_k}{b_k} \sum_{i=1}^{b_k} T_{\xi_{k,i}} (\bm{x}_k),
\end{align}
where $b_k$ is the batch size at the $k$-th iteration. We show that, under certain conditions (see Assumption \ref{assum:1} and its Example \ref{exp:1}), algorithm \eqref{mini_batch_KM} with $(\alpha_k)$ and $(b_k)$ satisfying 
\begin{align}\label{conditions_1_1}
\sum_{t=0}^{+\infty} \alpha_k (1 - \alpha_k) = + \infty \text{ and }
\sum_{t=0}^{+ \infty} \frac{1}{\sqrt{b_k}} < + \infty
\end{align}
converges almost surely to a fixed point of $T$ (Theorem \ref{thm:1}). The condition on $b_k$ in \eqref{conditions_1_1} implies that increasing batch sizes are essential for guaranteeing almost-sure convergence of the algorithm \eqref{mini_batch_KM}. In sampling with replacement, even if the batch size $b_k$ exceeds $n$, $T_{\bm{\xi}_k} \neq T$ holds in general. Hence, to examine the convergence of mini-batch algorithms under sampling with replacement, we can use $b_k \to + \infty$ $(k \to + \infty)$. Intuitively, when the batch size (the number of samples) $b_k$ is large enough, $T_{\bm{\xi}_k} = (1/b_k) \sum_{i=1}^{b_k} T_{\xi_{k,i}}$ is approximately $T$. Hence, for large enough $k$, algorithm \eqref{mini_batch_KM} is approximately algorithm \eqref{Kras_Mann}. As a result, the convergence of algorithm \eqref{Kras_Mann} implies the convergence of the algorithm \eqref{mini_batch_KM}. Below, we theoretically prove almost-sure convergence of algorithm \eqref{mini_batch_KM} through Proposition \ref{prop:1} and Lemmas \ref{lem:1} and \ref{lem:2} (Subsection \ref{sec:3.1}). 
 
We also show that algorithm \eqref{mini_batch_KM} with \eqref{conditions_1_1} has the following convergence rate (Theorem \ref{thm:2}):
\begin{align*}
\min_{t \in [0:K-1]} \mathbb{E} \left[ \| \bm{x}_k - T (\bm{x}_{k})\| \right]
&= 
O \left( \frac{1}{\sqrt{\sum_{k=0}^{K-1} \alpha_k (1 - \alpha_k)}} \right),
\end{align*}
which, together with $\alpha_k = \alpha \in (0,1)$, implies that $\min_{t \in [0:K-1]} \mathbb{E} [ \| \bm{x}_k - T (\bm{x}_{k})\| ] = O(1/\sqrt{K})$, where $[0:K-1] \coloneqq \{0,1,\cdots, K-1\}$ for $K \in \mathbb{N}$. Condition \eqref{conditions_1_1} implies that algorithm \eqref{mini_batch_KM} with a constant batch size $b_k = b$ would not be applicable to nonexpansive fixed point problems. To emphasize our claim, we reconsider the empirical risk minimization in the above subsection. Theorems 3.1 and 3.2 in \cite{umeda2025increasing} show that mini-batch SGD with a constant step size $\alpha_k = \alpha$ and a constant batch size $b_k = b$ satisfies 
\begin{align*}
\min_{k \in [0:K-1]}\mathbb{E} \left[ \|\nabla f (\bm{x}_k)\| \right] 
= O \left( \sqrt{\frac{1}{K} + \frac{1}{b}}  \right)
\to \mathcal{O} \left(\frac{1}{\sqrt{b}} \right) \neq 0 \quad (K \to + \infty),
\end{align*}
while mini-batch SGD with a constant step size $\alpha_k = \alpha$ and an increasing batch size $b_k$ (e.g., $b_k$ doubly increases every epoch) satisfies 
\begin{align*}
\min_{k \in [0:K-1]}\mathbb{E} \left[ \|\nabla f (\bm{x}_k)\| \right] 
= O \left( \frac{1}{\sqrt{K}}  \right)
\to 0 \quad (K \to + \infty),
\end{align*}
where the notation $\mathcal{O}$ is used in distinction from the Landau symbol $O$ and $C = \mathcal{O}(1/\sqrt{b})$ implies there exists $M > 0$ such that $C \leq M/\sqrt{b} \neq 0$. Hence, the use of increasing batch sizes is needed to guarantee theoretical convergence of algorithm \eqref{mini_batch_KM}.

Nevertheless, when using an increasing batch size $b_k$, there exists $k_0 \in \mathbb{N}$ such that, for all $k \geq k_0$, $b_k \geq n$. This implies that, for all $k \geq k_0$, computing $T_{\bm{\xi}_k} (\bm{x}_k) = (1/b_k) \sum_{i=1}^{b_k} T_{\xi_{k,i}} (\bm{x}_k)$ would be difficult in practice. However, the previous numerical results in \cite{umeda2025increasing} showed that increasing the batch size a finite number of times (e.g., for training ResNet-34 on ImageNet with $n = 1,281,167$, $b_0 = 2^5, b_1 = 2^6, b_2 = 2^7$, $b_3 = 2^8$, and $b_4 = 2^9$ were used in Section A.5 in \cite{umeda2025increasing}) accelerates mini-batch SGD. From a practical standpoint, finitely increasing the batch size within a computable range leads to faster convergence of algorithm \eqref{mini_batch_KM}.

\subsection*{Notation and definitions}
Let $\mathbb{N}$ be the set of all natural numbers and let $[n] \coloneqq \{1,2,\cdots, n\}$ for $n \in \mathbb{N}$. Furthermore, let $\mathbb{R}^d$ be a $d$-dimensional Euclidean space with inner product $\langle \cdot, \cdot \rangle$ and norm $\|\cdot\|$ and $\mathbb{R}_+^d \coloneqq \{ \bm{x} = (x_i)_{i=1}^d \in \mathbb{R}^d \colon x_i \geq 0 \text{ } (i\in [n]) \}$. The identity mapping on $\mathbb{R}^d$ is denoted by $\mathrm{Id}$ (i.e., $\mathrm{Id}(\bm{x}) \coloneqq \bm{x}$ for all $\bm{x} \in \mathbb{R}^d$). A closed ball at center $\bm{x} \in \mathbb{R}^d$ with radius $r > 0$ is denoted by $B_r (\bm{x}) \coloneqq \{ \bm{y} \in \mathbb{R}^d \colon \|\bm{y} - \bm{x} \| \leq r \}$.

For a mapping $T \colon \mathbb{R}^d \to \mathbb{R}^d$, the set of all fixed points of $T$ is defined by $\mathrm{Fix} (T) \coloneqq \{ \bm{x} \in \mathbb{R}^d \colon \bm{x} = T(\bm{x}) \}$. We call it the fixed point set of $T$. $T$ is said to be nonexpansive if $\| T(\bm{x}) - T(\bm{y})\| \leq \|\bm{x} - \bm{y}\|$ for all $\bm{x}, \bm{y} \in \mathbb{R}^d$. The fixed point set of a nonexpansive mapping is closed and convex \cite[Proposition 5.3]{goebel1}. The metric projection $P_C$ onto a nonempty, closed convex set $C$ $(\subset \mathbb{R}^d)$ is defined for all $\bm{x} \in \mathbb{R}^d$ by $P_C (\bm{x}) \in C$ and $\| \bm{x} - P_C (\bm{x}) \| = \inf_{\bm{y} \in \mathbb{R}^d} \| \bm{x} - \bm{y} \|$. $P_C$ is nonexpansive with $\mathrm{Fix}(P_C) = C$ \cite[Theorem 3.1.4(i)]{takahashi}, \cite[p.371]{bau}, \cite[Theorem 2.4-3]{stark}.

For an operator $A \colon \mathbb{R}^d \to \mathbb{R}^d$, the set of all zero points of $A$ is defined by $A^{-1}(\bm{0}) \coloneqq \{ \bm{x} \in \mathbb{R}^d \colon A(\bm{x}) = \bm{0} \}$. We call it the zero point set of $A$. Let $\gamma > 0$. $A$ is said to be $\gamma$-inverse-strongly monotone ($\gamma$-cocoercive) \cite[Definition 4.10]{b-c} if $\langle \bm{x} - \bm{y}, A(\bm{x}) - A(\bm{y}) \rangle \geq \gamma \|A(\bm{x}) - A(\bm{y})\|^2$ for all $\bm{x}, \bm{y} \in \mathbb{R}^d$. Let $\beta \in (0,2 \gamma]$. When $A$ is $\gamma$-inverse-strongly monotone, a mapping $T \coloneqq \mathrm{Id} - \beta A$ is nonexpansive with $\mathrm{Fix}(T) = A^{-1}(\bm{0})$ \cite[Proposition 2.3]{iiduka_JOTA}.

For a function $f \colon \mathbb{R}^d \to \mathbb{R}$, the set of all minimizers of $f$ is denoted by $\argmin_{\bm{x} \in \mathbb{R}^d} f(\bm{x})$. The gradient of a differentiable function $f$ is denoted by $\nabla f$. Let $L > 0$, and let $f \colon \mathbb{R}^d \to \mathbb{R}$ be differentiable. $\nabla f$ is said to be $1/L$-Lipschitz continuous (i.e., $f$ is said to be $1/L$-smooth) if $\|\nabla f (\bm{x}) - \nabla f (\bm{y}) \| \leq (1/L) \| \bm{x} - \bm{y} \|$ for all $\bm{x}, \bm{y} \in \mathbb{R}^d$. When $f$ is convex with a $1/L$-Lipschitz continuous gradient, $\nabla f$ is $L$-inverse-strongly monotone \cite[Th\'{e}or\`{e}me 5]{baillon1977}. Hence, from \cite[Proposition 2.3]{iiduka_JOTA}, a mapping $T \coloneqq \mathrm{Id} - \eta \nabla f$, where $\eta \in (0,2L]$, is nonexpansive with $\mathrm{Fix}(T) = \argmin_{\bm{x} \in \mathbb{R}^d} f(\bm{x})$.

$\mathrm{P}(A)$ denotes the probability of event $A$. $\mathbb{E}_\xi [\bm{X}(\xi)]$ denotes the expectation of a random variable $\bm{X}(\xi)$ with respect to a random variable $\xi$. The variance of $\bm{X}(\xi)$ with respect to $\xi$ is defined by $\mathbb{V}_\xi [\bm{X}(\xi)] \coloneqq \mathbb{E}_\xi [\| \bm{X}(\xi) - \mathbb{E}_\xi [\bm{X}(\xi)] \|^2]$. $\mathbb{E}_\xi [\bm{X}(\xi)|\bm{Y}]$ (resp. $\mathbb{V}_\xi [\bm{X}(\xi)|\bm{Y}]$) denotes the expectation (resp. the variance) of $\bm{X}(\xi)$ conditioned on $\bm{Y}$. In the case of the independence of $\bm{\xi}_0, \bm{\xi}_1, \cdots, \bm{\xi}_k$, we define the total expectation $\mathbb{E}$ by $\mathbb{E} \coloneqq \mathbb{E}_{\bm{\xi}_0} \mathbb{E}_{\bm{\xi}_1} \cdots \mathbb{E}_{\bm{\xi}_k}$. In particular, we write $\xi \sim \mathrm{DU}(n)$ when $\xi$ follows a discrete uniform distribution on $[n]$.

\section{Stochastic Nonexpansive Fixed Point Problem}
\label{sec:2}
\subsection{Stochastic fixed point problem and examples}
Let us consider the following problem.

\begin{problem}\label{prob:1}
Let $n \in \mathbb{N}$ and $T_i \colon \mathbb{R}^d \to \mathbb{R}^d$ ($i\in [n]$). Furthermore, let $\xi$ be a random variable taking values in $[n]$. A stochastic mapping $T_\xi \colon \mathbb{R}^d \to \mathbb{R}^d$ is randomly chosen in $\{ T_i \}_{i=1}^{n}$ and a mapping $T \colon \mathbb{R}^d \to \mathbb{R}^d$ is defined for all $\bm{x} \in \mathbb{R}^d$ by 
\begin{align}\label{true_t}
T (\bm{x}) \coloneqq \mathbb{E}_\xi [T_\xi (\bm{x})],
\end{align}
where $\xi$ is independent of $\bm{x}$.
Here, we would like to find a fixed point $\bm{x}^\star$ of $T$, i.e., 
\begin{align*}
\bm{x}^\star = T (\bm{x}^\star) = \mathbb{E}_\xi [T_\xi (\bm{x}^\star)].
\end{align*} 
\end{problem}

This paper considers Problem \ref{prob:1} under the following assumptions:

\begin{assumption}\label{assum:1}
Let $n$ and $T$ be as in Problem \ref{prob:1}.
\begin{enumerate}
\item[{\em (A1)}] {\em [Nonexpansivity]} $T_i$ $(i\in [n])$ is nonexpansive, i.e., $\|T_i (\bm{x}) - T_i (\bm{y})\| \leq \|\bm{x} - \bm{y}\|$ $(\bm{x}, \bm{y} \in \mathbb{R}^d)$;
\item[{\em (A2)}] {\em [Boundedness of variance of stochastic mapping]} There exists $\sigma \geq 0$ such that, for all $\bm{x} \in \mathbb{R}^{d}$, 
\begin{align*}
\mathbb{V}_{\xi}[T_{\xi}(\bm{x})] \coloneqq \mathbb{E}_{\xi}[\| T_{\xi}(\bm{x}) - \underbrace{\mathbb{E}_{\xi}[T_{\xi}(\bm{x})]}_{T (\bm{x})} \|^2] \leq \sigma^{2},
\end{align*}
where $\xi$ is independent of $\bm{x}$.
\end{enumerate}
\end{assumption}

We give some examples of Problem \ref{prob:1} under Assumption \ref{assum:1}.

\begin{example}\label{exp:1}
Let $n \in \mathbb{N}$ and $\gamma_i > 0$ $(i\in [n])$, $\gamma \coloneqq \min_{i\in [n]} \gamma_i$, and $\beta \in (0,2 \gamma]$. Furthermore, let $L_i > 0$ $(i\in [n])$, $L \coloneqq \min_{i\in [n]} L_i$, and $\eta \in (0,2L]$. Let $C_i \subset \mathbb{R}^d$ $(i\in [n])$ be nonempty, closed, and convex with $C \coloneqq \bigcap_{i=1}^n C_i \neq \emptyset$, and let $P_{C_i} \coloneqq P_i$. Let $A_i \colon \mathbb{R}^d \to \mathbb{R}^d$ $(i\in [n])$ be $\gamma_i$-inverse-strongly monotone with $\bigcap_{i=1}^n A_i^{-1}(\bm{0}) \neq \emptyset$. Finally, let $f_i \colon \mathbb{R}^d \to \mathbb{R}$ $(i\in [n])$ be convex and $1/L_i$-smooth and $f \coloneqq (1/n) \sum_{i=1}^n f_i$.
\begin{enumerate}
\item[(i)] [Convex feasibility problem] Problem \ref{prob:1} for a mapping $T$ defined for all $\bm{x} \in \mathbb{R}^d$ by
\begin{align}\label{cfp_t}
T (\bm{x})
= 
\mathbb{E}_{\xi \sim \mathrm{DU}(n)} [P_\xi (\bm{x})]
= \frac{1}{n} \sum_{i=1}^n P_i (\bm{x})
\end{align}
is a convex feasibility problem for $C$ (that is, the problem of finding a point in $C$). Moreover, the following hold.

{(A1)} $T_i = P_i$ is nonexpansive.

{(A2)} for all $\bm{x}^\star \in C$ and all $\bm{x} \in B_{r}(\bm{x}^\star)$, $\mathbb{V}_{\xi \sim \mathrm{DU}(n)}[P_\xi (\bm{x})] \leq (r + \|\bm{x}^\star\|)^2$.

\item[(ii)] [Zero point problem] Problem \ref{prob:1} for a mapping $T$ defined for all $\bm{x} \in \mathbb{R}^d$ by
\begin{align}\label{zero_t}
T (\bm{x})
= 
\mathbb{E}_{\xi \sim \mathrm{DU}(n)} [(\mathrm{Id} - \beta A_\xi) (\bm{x})]
= \bm{x} - \frac{\beta}{n} \sum_{i=1}^n A_i (\bm{x})
\end{align}
is a zero point problem (that is, the problem of finding $\bm{x}^\star \in \bigcap_{i=1}^n A_i^{-1}(\bm{0})$). Moreover, the following hold.

{(A1)} $T_i = \mathrm{Id} - \beta A_i$ is nonexpansive.

{(A2)} for all $\bm{x}^\star \in \bigcap_{i=1}^n A_i^{-1}(\bm{0})$ and all $\bm{x} \in B_{r}(\bm{x}^\star)$, $\mathbb{V}_{\xi \sim \mathrm{DU}(n)}[\bm{x} - \beta A_\xi (\bm{x})] \leq r^2 (\beta (2 \gamma - \beta))^{-1}$. 

\item[(iii)] [Convex minimization problem] Let $\nabla f_\xi \colon \mathbb{R}^d \to \mathbb{R}^d$ be a stochastic gradient of $f = \frac{1}{n} \sum_{i=1}^n f_i$ such that 
\begin{enumerate}
\item[(C1)] [Unbiasedness of stochastic gradient] For all $\bm{x} \in \mathbb{R}^d$, $\mathbb{E}_{\xi \sim \mathrm{DU}(n)} [\nabla f_\xi (\bm{x})] = \nabla f (\bm{x})$;
\item[(C2)]
[Boundedness of variance of stochastic gradient] There exists $\sigma_g \geq 0$ such that, for all $\bm{x} \in \mathbb{R}^{d}$, $\mathbb{V}_{\xi \sim \mathrm{DU}(n)}[\nabla f_{\xi}(\bm{x})] \leq \sigma_g^{2}$.
\end{enumerate} 
Problem \ref{prob:1} for a mapping $T$ defined for all $\bm{x} \in \mathbb{R}^d$ by
\begin{align}\label{convex_t}
T (\bm{x})
= 
\mathbb{E}_{\xi \sim \mathrm{DU}(n)} [(\mathrm{Id} - \eta \nabla f_\xi) (\bm{x})]
= \bm{x} - \eta \nabla f(\bm{x})
\end{align}
is a convex minimization problem for $f$. Moreover, the following hold.

{(A1)} $T_i = \mathrm{Id} - \eta \nabla f_i$ is nonexpansive.

{(A2)} for all $\bm{x} \in \mathbb{R}^d$, $\mathbb{V}_{\xi \sim \mathrm{DU}(n)}^{\mathfrak{p}}[\bm{x} - \eta \nabla f_\xi (\bm{x})] \leq (\eta \sigma_g)^{2}$.

In particular, $\sigma_g = \sqrt{ (2/n) \sum_{i=1}^n (f_i^{\star\star} - f_i^{\star}) L_i^{-1}}$ on $\{ \bm{x} \in \mathbb{R}^d \colon - \infty < f_i^\star \leq f_i (\bm{x}) \leq f_i^{\star\star} < + \infty \text{ } (i\in [n])\}$.
\end{enumerate} 
\end{example}

\begin{proof}
(i) We define $T_i \coloneqq P_i$ $(i\in [n])$. Under $C \coloneqq \bigcap_{i=1}^n C_i = \bigcap_{i=1}^n\mathrm{Fix}(T_i) \neq \emptyset$, Proposition 4.47 in \cite{b-c} and \eqref{cfp_t} ensure that 
\begin{align*}
\mathrm{Fix}(T) 
= \mathrm{Fix} \left( \frac{1}{n} \sum_{i=1}^n P_i  \right)
= \mathrm{Fix} \left( \frac{1}{n} \sum_{i=1}^n T_i  \right)
= \bigcap_{i=1}^n \mathrm{Fix}(T_i)
= \bigcap_{i=1}^n C_i = C,
\end{align*}
which implies that Problem \ref{prob:1} for $T$ defined by \eqref{cfp_t} is a convex feasibility problem. Since $P_i$ is nonexpansive (see, e.g., \cite[Theorem 3.1.4]{takahashi}), (A1) holds.

We prove that (A2) holds. The triangle inequality and the nonexpansivity of $P_i$ $(i\in [n])$ imply that, for all $\bm{x}^\star \in C$ and all $\bm{x} \in B_{r} (\bm{x}^\star)$,
\begin{align}\label{norm_p_i}
\|P_i (\bm{x})\| 
\leq 
\| P_i (\bm{x}) - P_i (\bm{x}^\star)  \| + \|\bm{x}^\star\|
\leq \|\bm{x} - \bm{x}^\star\| + \|\bm{x}^\star\| \leq r + \|\bm{x}^\star\|.
\end{align}
Using \eqref{norm_p_i} leads to the finding that 
\begin{align*}
\mathbb{V}_{\xi \sim \mathrm{DU}(n)}[P_\xi (\bm{x})]
&\leq 
\mathbb{E}_{\xi \sim \mathrm{DU}(n)} \left[\| P_\xi (\bm{x})\|^2 \right]
= 
\sum_{i=1}^n \|P_i (\bm{x})\|^2 \mathrm{P}(\xi = i)
=  
\left( r + \|\bm{x}^\star\| \right)^{2}.
\end{align*}

(ii) We define $T_i \coloneqq \mathrm{Id} - \beta A_i$ $(i\in [n])$. Under $\bigcap_{i=1}^n A_i^{-1}(\bm{0}) = \bigcap_{i=1}^n\mathrm{Fix}(T_i) \neq \emptyset$, Proposition 4.47 in \cite{b-c} and \eqref{zero_t} ensure that 
\begin{align*}
\mathrm{Fix}(T) 
= \mathrm{Fix} \left( \frac{1}{n} \sum_{i=1}^n (\mathrm{Id} - \beta A_i)  \right)
= \mathrm{Fix} \left( \frac{1}{n} \sum_{i=1}^n T_i  \right)
= \bigcap_{i=1}^n \mathrm{Fix}(T_i)
= \bigcap_{i=1}^n A_i^{-1}(\bm{0}),
\end{align*}
which implies that Problem \ref{prob:1} for $T$ defined by \eqref{zero_t} is a zero point problem. From $\beta \leq 2 \gamma \leq 2 \gamma_i$ $(i\in [n])$ and \cite[Proposition 2.3]{iiduka_JOTA}, $\mathrm{Id} - \beta A_i$ is nonexpansive, which implies that (A1) holds.

We prove that (A2) holds. The $\gamma_i$-inverse-strong monotonicity of $A_i$ $(i\in [n])$ and the relation $\|\bm{x} - \bm{y} \|^2 = \|\bm{x}\|^2 -2 \langle \bm{x},\bm{y} \rangle + \| \bm{y} \|^2$ $(\bm{x},\bm{y} \in \mathbb{R}^d)$ ensure that, for all $\bm{x}^\star \in \bigcap_{i=1}^n A_i^{-1}(\bm{0})$ and all $\bm{x} \in B_{r}(\bm{x}^\star)$, 
\begin{align*}
0 
&\leq 
\|( \bm{x} - \beta A_i (\bm{x})) - ( \bm{x}^\star - \beta A_i (\bm{x}^\star))\|^2
= 
\| (\bm{x} - \bm{x}^\star) - \beta (A_i (\bm{x}) - A_i (\bm{x}^\star)) \|^2\\
&= 
\| \bm{x} - \bm{x}^\star \|^2 - 2 \beta \langle \bm{x} - \bm{x}^\star, A_i (\bm{x}) - A_i (\bm{x}^\star) \rangle 
+ \beta^2 \|A_i (\bm{x}) - A_i (\bm{x}^\star)\|^2\\
&\leq
\| \bm{x} - \bm{x}^\star \|^2 - 2 \beta \gamma_i \| A_i (\bm{x}) - A_i (\bm{x}^\star) \|^2 
+ \beta^2 \|A_i (\bm{x}) - A_i (\bm{x}^\star)\|^2\\
&= 
\| \bm{x} - \bm{x}^\star \|^2 + \beta (\beta - 2 \gamma_i)\|A_i (\bm{x}) - A_i (\bm{x}^\star)\|^2,
\end{align*}
which, together with $\gamma = \min_{i \in [n]} \gamma_i$, $\beta \in (0,2 \gamma]$, $\bm{x} \in B_r (\bm{x}^\star)$, and $A_i (\bm{x}^\star) = \bm{0}$, implies that 
\begin{align}\label{norm_a_i}
\|A_i (\bm{x})\|^2 \leq \frac{r^2}{\beta (2 \gamma - \beta)}.
\end{align}
From \eqref{norm_a_i}, we have 
\begin{align*}
\mathbb{V}_{\xi \sim \mathrm{DU}(n)}[A_\xi (\bm{x})]
&\leq 
\mathbb{E}_{\xi \sim \mathrm{DU}(n)} \left[\| A_\xi (\bm{x})\|^2 \right] 
= 
\sum_{i=1}^n \|A_i (\bm{x})\|^2 \mathrm{P}(\xi = i)
=
\frac{r^2}{\beta (2 \gamma - \beta)}.
\end{align*} 

(iii) We define $T_i \coloneqq \mathrm{Id} - \eta \nabla f_i$ $(i\in [n])$. From \eqref{convex_t} and the convexity of $f$, we have 
\begin{align*}
\mathrm{Fix}(T) = (\nabla f)^{-1} (\bm{0}) = \argmin_{\bm{x} \in \mathbb{R}^d} f(\bm{x}),
\end{align*}
which implies that Problem \ref{prob:1} for $T$ defined by \eqref{convex_t} is a convex minimization problem. $\nabla f_i$ is $L_i$-inverse-strongly monotone \cite[Th\'{e}or\`{e}me 5]{baillon1977}. From $\eta \leq 2 L \leq 2 L_i$ $(i\in [n])$ and \cite[Proposition 2.3]{iiduka_JOTA}, $\mathrm{Id} - \eta \nabla f_i$ is nonexpansive, which implies that (A1) holds.

Finally, we prove that (A2) holds. Using (C1) and (C2) leads to 
\begin{align*}
\mathbb{V}_{\xi \sim \mathrm{DU}(n)}[\bm{x} - \eta \nabla f_\xi (\bm{x})]
&= 
\mathbb{E}_{\xi \sim \mathrm{DU}(n)}
[ \| ( \bm{x} - \eta \nabla f_\xi (\bm{x}) ) - ( \bm{x} - \eta \nabla f (\bm{x}) ) \|^{2} ]\\
&= 
\eta^{2}
\mathbb{E}_{\xi \sim \mathrm{DU}(n)} [ \|\nabla f_\xi (\bm{x}) - \nabla f (\bm{x})\|^{2}]\\
&=
\eta^{2} \mathbb{V}_{\xi \sim \mathrm{DU}(n)}
[\nabla f_\xi (\bm{x})] 
\leq 
(\eta \sigma_g)^{2}.
\end{align*}
Let $\bm{x}$ satisfy $- \infty < f_i^\star \leq f_i (\bm{x}) \leq f_i^{\star\star} < + \infty$ and $\bm{y} \coloneqq \bm{x} - L_i \nabla f_i (\bm{x})$. The descent lemma for the $1/L_i$-smoothness of $f_i$ ensures that
\begin{align*}
f_i^\star 
&\leq f_i(\bm{y}) 
\leq 
f_i(\bm{x}) + \langle \nabla f_i (\bm{x}), \bm{y} - \bm{x} \rangle
+ 
\frac{1}{2 L_i} \|\bm{y} - \bm{x}\|^2\\
&= 
f_i(\bm{x})
- L_i \| \nabla f_i (\bm{x}) \|^2
+ \frac{L_i}{2} \| \nabla f_i (\bm{x}) \|^2\\
&= 
f_i(\bm{x}) - \frac{L_i}{2} \| \nabla f_i (\bm{x}) \|^2,
\end{align*}
which, together with $f_i^{\star\star} \in \mathbb{R}$, implies that
\begin{align}\label{norm_nabla_f_i}
\| \nabla f_i (\bm{x}) \|^{2}
\leq
\frac{2}{L_i} (f_i^{\star\star} - f_i^{\star}).
\end{align}
Using \eqref{norm_nabla_f_i} implies 
\begin{align*}
\mathbb{V}_{\xi \sim \mathrm{DU}(n)} \left[ \nabla f_\xi (\bm{x}) \right]
&\leq  
\mathbb{E}_{\xi \sim \mathrm{DU}(n)} \left[ \|\nabla f_\xi (\bm{x})\|^2   \right]
=
\sum_{i=1}^n \|\nabla f_i (\bm{x})\|^2 \mathrm{P}(\xi = i)\\
&= 
\frac{2}{n} \sum_{i=1}^n
\frac{f_i^{\star\star} - f_i^{\star}}{L_i},
\end{align*}
which complete the proof.
\qed
\end{proof}

\subsection{Mini-batch stochastic mapping}
Let $b \in \mathbb{N}$ be the batch size (number of samples), and let $\bm{\xi} = (\xi_{1}, \xi_{2}, \cdots, \xi_{b})^\top$ comprise $b$ i.i.d. variables. Here, we define the {\em mini-batch stochastic mapping} of $T$ obeying \eqref{true_t} for any $\bm{x} \in \mathbb{R}^d$ as follows:
\begin{align}\label{mini_batch}
T_{\bm{\xi}}(\bm{x}) \coloneqq 
\frac{1}{b} \sum_{i=1}^b T_{\xi_{i}}(\bm{x}),
\end{align} 
where $\bm{x}$ is independent of $\bm{\xi}$. The following proposition demonstrates that the mini-batch stochastic mapping $T_{\bm{\xi}}$ inherits useful properties of the stochastic mapping $T_\xi$, such as unbiasedness (\eqref{true_t} in Problem \ref{prob:1}) and boundedness of variance (Assumption \ref{assum:1}(A2)). 

\begin{proposition}\label{prop:1}
Consider Problem \ref{prob:1} under Assumption \ref{assum:1} and let $T_{\bm{\xi}}$ be the mini-batch stochastic mapping defined by (\ref{mini_batch}). Further, let $\bm{x}, \bm{y} \in \mathbb{R}^d$ be independent of $\bm{\xi}$. Then, the following hold.
\begin{enumerate}
\item[{\em (i)}] {\em [Unbiasedness of mini-batch stochastic mapping]} $\displaystyle{\mathbb{E}_{\bm{\xi}}[T_{\bm{\xi}}(\bm{x})] = T(\bm{x})}$;
\item[{\em (ii)}] {\em [Boundedness of variance of mini-batch stochastic mapping]} $\displaystyle{\mathbb{V}_{\bm{\xi}}[T_{\bm{\xi}} (\bm{x})] \leq \frac{\sigma^2}{b}}$.
\end{enumerate}
Moreover, {\em (i)} and {\em (ii)} lead to the finding that 
\begin{align}
&\mathbb{E}_{\bm{\xi}} \left[ \| T_{\bm{\xi}}(\bm{x}) - T(\bm{y}) \|^2 \right]
\leq 
\|\bm{x} - \bm{y} \|^2 
+ 
\frac{\sigma^2}{b}, \label{app_nonexp}\\
&
\|\bm{x} - T (\bm{x})\|^2
\leq
\mathbb{E}_{\bm{\xi}} \left[ \| \bm{x} - T_{\bm{\xi}}(\bm{x}) \|^2 \right] 
\leq \|\bm{x} - T (\bm{x})\|^2 + \frac{\sigma^2}{b}. \label{app_nonexp_1}
\end{align}
\end{proposition}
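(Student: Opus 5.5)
Part (i) follows from linearity of expectation and the identical distribution of the $\xi_i$. Since each $\xi_i$ has the same law as $\xi$ and $\bm{x}$ is independent of $\bm{\xi}$,
\begin{align*}
\mathbb{E}_{\bm{\xi}}[T_{\bm{\xi}}(\bm{x})] = \frac{1}{b}\sum_{i=1}^b \mathbb{E}_{\xi_i}[T_{\xi_i}(\bm{x})] = \frac{1}{b}\sum_{i=1}^b T(\bm{x}) = T(\bm{x}),
\end{align*}
using \eqref{true_t}.

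For (ii), set $\bm{Z}_i \coloneqq T_{\xi_i}(\bm{x}) - T(\bm{x})$. Then $T_{\bm{\xi}}(\bm{x}) - T(\bm{x}) = (1/b)\sum_{i=1}^b \bm{Z}_i$. Expanding the squared norm gives
\begin{align*}
\mathbb{V}_{\bm{\xi}}[T_{\bm{\xi}}(\bm{x})] = \frac{1}{b^2}\sum_{i=1}^b \mathbb{E}\|\bm{Z}_i\|^2 + \frac{1}{b^2}\sum_{i\neq j}\mathbb{E}\langle \bm{Z}_i,\bm{Z}_j\rangle .
\end{align*}
The $\bm{Z}_i$ are independent because the $\xi_i$ are i.i.d. and $\bm{x}$ is fixed relative to $\bm{\xi}$. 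They are also mean zero by \eqref{true_t}. Hence each cross term factors as $\langle \mathbb{E}\bm{Z}_i, \mathbb{E}\bm{Z}_j\rangle = 0$. Each diagonal term is at most $\sigma^2$ by Assumption \ref{assum:1}(A2), so the variance is at most $b\sigma^2/b^2 = \sigma^2/b$. The one point needing care is that the cross terms vanish, and this is exactly where independence of $\bm{x}$ (and $\bm{y}$) from $\bm{\xi}$ is used.

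For \eqref{app_nonexp}, write
\begin{align*}
T_{\bm{\xi}}(\bm{x}) - T(\bm{y}) = \bigl(T_{\bm{\xi}}(\bm{x}) - T(\bm{x})\bigr) + \bigl(T(\bm{x}) - T(\bm{y})\bigr).
\end{align*}
The second summand is deterministic with respect to $\bm{\xi}$. Expanding the square and using (i), the cross term has zero expectation, which gives
\begin{align*}
\mathbb{E}_{\bm{\xi}}\|T_{\bm{\xi}}(\bm{x}) - T(\bm{y})\|^2 = \mathbb{V}_{\bm{\xi}}[T_{\bm{\xi}}(\bm{x})] + \|T(\bm{x}) - T(\bm{y})\|^2 .
\end{align*}
The variance term is at most $\sigma^2/b$ by (ii). For the second term, $T = \frac{1}{n}\sum_i T_i$ is an average of nonexpansive maps in Example \ref{exp:1}. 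More generally $T(\bm{x}) - T(\bm{y}) = \mathbb{E}_\xi[T_\xi(\bm{x}) - T_\xi(\bm{y})]$, so by Jensen's inequality and (A1), $\|T(\bm{x}) - T(\bm{y})\| \leq \mathbb{E}_\xi\|T_\xi(\bm{x}) - T_\xi(\bm{y})\| \leq \|\bm{x} - \bm{y}\|$. Thus $T$ is nonexpansive and \eqref{app_nonexp} follows.

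For \eqref{app_nonexp_1}, decompose in the same way:
\begin{align*}
\bm{x} - T_{\bm{\xi}}(\bm{x}) = \bigl(\bm{x} - T(\bm{x})\bigr) - \bigl(T_{\bm{\xi}}(\bm{x}) - T(\bm{x})\bigr).
\end{align*}
Expanding the square, the cross term again vanishes in expectation by (i). This gives the exact identity
\begin{align*}
\mathbb{E}_{\bm{\xi}}\|\bm{x} - T_{\bm{\xi}}(\bm{x})\|^2 = \|\bm{x} - T(\bm{x})\|^2 + \mathbb{V}_{\bm{\xi}}[T_{\bm{\xi}}(\bm{x})].
\end{align*}
The lower bound in \eqref{app_nonexp_1} holds because the variance is nonnegative, and the upper bound follows from (ii).
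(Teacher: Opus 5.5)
Your proposal is correct and follows the paper's proof essentially step for step. You use linearity of expectation for (i) and the independence of the $\xi_i$ to make the cross terms vanish in (ii), writing out the expansion the paper compresses into ``variance of a sum of independent terms.'' For \eqref{app_nonexp} and \eqref{app_nonexp_1} you use the same decompositions around $T(\bm{x})$, with the cross term eliminated by (i) and the nonexpansivity of $T$ obtained from (A1) by bounding $\|\mathbb{E}_\xi[T_\xi(\bm{x}) - T_\xi(\bm{y})]\|$.
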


\begin{proof}
(i) From the property of $\mathbb{E}_{\bm{\xi}}$ and the definitions of $T_{\bm{\xi}}$ and $T$, we have 
\begin{align*}
\mathbb{E}_{\bm{\xi}}[T_{\bm{\xi}}(\bm{x})]
= 
\mathbb{E}_{\bm{\xi}} \left[
\frac{1}{b} \sum_{i=1}^b T_{\xi_{i}}(\bm{x})
\right]
= 
\frac{1}{b} \sum_{i=1}^b \mathbb{E}_{\xi_i} [T_{\xi_{i}}(\bm{x})]
= \frac{1}{b} \sum_{i=1}^b T (\bm{x})
= T (\bm{x}).
\end{align*}

(ii) The property of $\mathbb{V}_{\bm{\xi}}$ and the independence of $\xi_i$s ensure that
\begin{align*}
\mathbb{V}_{\bm{\xi}}[T_{\bm{\xi}} (\bm{x})]
&= 
\mathbb{V}_{\bm{\xi}} \left[
\frac{1}{b} \sum_{i=1}^b T_{\xi_{i}}(\bm{x}) 
\right]
= 
\frac{1}{b^{2}}
\mathbb{V}_{\bm{\xi}} \left[
\sum_{i=1}^b T_{\xi_{i}}(\bm{x})
\right]
= 
\frac{1}{b^{2}}
\sum_{i=1}^b
\mathbb{V}_{\xi_i} \left[
T_{\xi_{i}}(\bm{x})
\right],
\end{align*}
which, together with Assumption \ref{assum:1}(A2), implies that
\begin{align*}
\mathbb{V}_{\bm{\xi}}[T_{\bm{\xi}} (\bm{x})]
\leq 
\frac{1}{b^{2}}
\sum_{i=1}^b \sigma^2
= \frac{\sigma^2}{b}.
\end{align*}

Next, we prove that \eqref{app_nonexp} holds. The definition \eqref{true_t} of $T$ and the property of $\mathbb{E}_\xi$ ensure that
\begin{align*}
\| T(\bm{x}) - T (\bm{y})  \|
&= 
\| \mathbb{E}_\xi [T_\xi (\bm{x}) ] - \mathbb{E}_\xi [T_\xi (\bm{y}) ] \|
= 
\| \mathbb{E}_\xi [T_\xi (\bm{x}) - T_\xi (\bm{y}) ] \|\\
&= 
\left\| \sum_{i=1}^n (T_i (\bm{x}) - T_i (\bm{y}) ) \mathrm{P}(\xi = i) \right\|,
\end{align*}
which, together with the triangle inequality and Assumption \ref{assum:1}(A1), implies that 
\begin{align}\label{nonexp_T}
\| T(\bm{x}) - T (\bm{y})  \|
\leq
\sum_{i=1}^n \| T_i (\bm{x}) - T_i (\bm{y}) \| \mathrm{P}(\xi = i)
\leq 
\| \bm{x} - \bm{y} \|.
\end{align}
That is, $T$ defined by \eqref{true_t} is nonexpansive. From the equation $\|\bm{x} + \bm{y}\|^2 = \|\bm{x}\|^2 + 2 \langle \bm{x},\bm{y} \rangle + \|\bm{y}\|^2$ $(\bm{x},\bm{y} \in \mathbb{R}^d)$ and the property of $\mathbb{E}_{\bm{\xi}}$,
\begin{align}\label{ineq:1}
\begin{split}
&\mathbb{E}_{\bm{\xi}} \left[ \| T_{\bm{\xi}}(\bm{x}) - T(\bm{y}) \|^2 \right]\\
&= 
\mathbb{E}_{\bm{\xi}} \left[ \| (T_{\bm{\xi}}(\bm{x}) - T(\bm{x})) 
+ (T(\bm{x}) - T (\bm{y})) \|^2 \right]\\
&=
\mathbb{E}_{\bm{\xi}} \left[ \| T_{\bm{\xi}}(\bm{x}) - T(\bm{x}) \|^2 \right]
+ 2 \mathbb{E}_{\bm{\xi}} \left[ \langle T_{\bm{\xi}}(\bm{x}) - T(\bm{x}),  T(\bm{x}) - T (\bm{y}) \rangle \right]\\
&\quad + 
\mathbb{E}_{\bm{\xi}} \left[ \| T(\bm{x}) - T (\bm{y})  \|^2 \right]\\
&= 
\underbrace{\mathbb{E}_{\bm{\xi}} \left[ \| T_{\bm{\xi}}(\bm{x}) - T(\bm{x}) \|^2 \right]}_{\mathbb{V}_{\bm{\xi}}[T_{\bm{\xi}}(\bm{x})] \leq \frac{\sigma^2}{b}}
+ 2  \langle \underbrace{\mathbb{E}_{\bm{\xi}} \left[ T_{\bm{\xi}}(\bm{x})\right]}_{T(\bm{x})} - T(\bm{x}),  T(\bm{x}) - T (\bm{y}) \rangle \\
&\quad + 
\| T(\bm{x}) - T (\bm{y})  \|^2,
\end{split} 
\end{align}
which, together with the nonexpansivity of $T$ and Proposition \ref{prop:1}(i) and (ii), implies that \eqref{app_nonexp} holds.

Finally, we prove that \eqref{app_nonexp_1} holds. A similar argument to the one above for \eqref{ineq:1} leads to the finding that 
\begin{align*}
&\mathbb{E}_{\bm{\xi}} \left[ \| \bm{x} - T_{\bm{\xi}}(\bm{x}) \|^2 \right]\\
&= 
\mathbb{E}_{\bm{\xi}} \left[ \| (\bm{x} - T(\bm{x})) 
+ (T(\bm{x}) - T_{\bm{\xi}} (\bm{x})) \|^2 \right]\\
&=
\mathbb{E}_{\bm{\xi}} \left[ \| \bm{x} - T(\bm{x}) \|^2 \right]
+ 2 \mathbb{E}_{\bm{\xi}} \left[ \langle \bm{x} - T(\bm{x}), T(\bm{x}) - T_{\bm{\xi}} (\bm{x}) \rangle \right]\\
&\quad + 
\mathbb{E}_{\bm{\xi}} \left[ \| T(\bm{x}) - T_{\bm{\xi}} (\bm{x})  \|^2 \right]\\
&=
\| \bm{x} - T(\bm{x}) \|^2
+ 2  \langle \bm{x} - T(\bm{x}), T(\bm{x}) - \underbrace{\mathbb{E}_{\bm{\xi}} \left[T_{\bm{\xi}} (\bm{x}) \right]}_{T(\bm{x})} \rangle 
+ 
\underbrace{\mathbb{E}_{\bm{\xi}} \left[ \| T(\bm{x}) - T_{\bm{\xi}} (\bm{x})  \|^2 \right]}_{0 \leq \mathbb{V}_{\bm{\xi}}[T_{\bm{\xi}} (\bm{x})] \leq \frac{\sigma^2}{b}}.
\end{align*}
This completes the proof.
\qed
\end{proof}

Let us check the properties of \eqref{app_nonexp} and \eqref{app_nonexp_1}. Inequality \eqref{app_nonexp} implies that, when the batch size $b$ is large enough,
\begin{align*}
&\mathbb{E}_{\bm{\xi}} \left[ \| T_{\bm{\xi}}(\bm{x}) - T(\bm{x}) \|^2 \right]
\leq 
\|\bm{x} - \bm{x} \|^2 
+ 
\frac{\sigma^2}{b}
\approx 
0,\\
&\mathbb{E}_{\bm{\xi}} \left[ \| T_{\bm{\xi}}(\bm{x}) - T(\bm{y}) \|^2 \right]
\leq 
\|\bm{x} - \bm{y} \|^2 
+ 
\frac{\sigma^2}{b}
\approx 
\|\bm{x} - \bm{y} \|^2,
\end{align*}
which implies that $T_{\bm{\xi}}$ with a large $b$ approximates a nonexpansive mapping $T$ (see \eqref{nonexp_T} for the nonexpansivity of $T$). Inequality \eqref{app_nonexp_1} implies that, when the batch size $b$ is large enough,
\begin{align*} 
\|\bm{x} - T (\bm{x})\|^2
\leq
\mathbb{E}_{\bm{\xi}} \left[ \| \bm{x} - T_{\bm{\xi}}(\bm{x}) \|^2 \right] 
\leq \|\bm{x} - T (\bm{x})\|^2 + \frac{\sigma^2}{b} \approx \|\bm{x} - T (\bm{x})\|^2.
\end{align*}
In particular, setting $\bm{x} = \bm{x}^\star \in \mathrm{Fix}(T)$ leads to
\begin{align*}
\mathbb{E}_{\bm{\xi}} \left[ \| \bm{x}^\star - T_{\bm{\xi}}(\bm{x}^\star) \|^2 \right] 
\approx \|\bm{x}^\star - T (\bm{x}^\star)\|^2 = 0,
\end{align*}
which implies that a fixed point of $T$ can be approximately a fixed point of $T_{\bm{\xi}}$ with a large $b$.

\section{Mini-Batch Stochastic Krasnosel'ki\u\i-Mann Algorithm}
\label{sec:3}
Let $\xi_{k,i}$ be a random variable generated by the $i$-th sampling in the $k$-th iteration. Since the $k$-th iterate $\bm{x}_k$ is computed before $\bm{\xi}_k = (\xi_{k,1}, \xi_{k,2}, \cdots, \xi_{k,b_k})^\top$ is sampled, $\bm{x}_k$ is independent of $\bm{\xi}_k$. Hence, by referring to \eqref{mini_batch}, we can define the mini-batch stochastic mapping of $T$ at the $k$-th iteration by
\begin{align}\label{mini_batch_t}
T_{\bm{\xi}_k} (\bm{x}_k) \coloneqq \frac{1}{b_k} \sum_{i=1}^{b_k} T_{\xi_{k,i}}(\bm{x}_k).
\end{align}
The pseudo-code of the mini-batch stochastic Krasnosel'ki\u\i-Mann Algorithm is listed below. 

\begin{algorithm} 
\caption{Mini-Batch Stochastic Krasnosel'ki\u\i-Mann Algorithm} 
\label{algo:1} 
\begin{algorithmic}[1] 
\REQUIRE
$\bm{x}_{0} \in\mathbb{R}^d$ (initial point), $\alpha_k \in (0,1)$ (step size), $b_k \in \mathbb{N}$ (batch size), $K \in \mathbb{N}$ (steps).
\ENSURE
$\bm{x}_{K}$
\FOR{$k = 0, 1, \cdots, K-1$}
\STATE
$\bm{\xi}_k = (\xi_{k,1}, \xi_{k,2}, \cdots, \xi_{k,b_k})^\top$
\STATE
$T_{\bm{\xi}_k}(\bm{x}_k) \coloneqq \frac{1}{b_k} \sum_{i=1}^{b_k} T_{\xi_{k,i}}(\bm{x}_k)$
\STATE
$\bm{x}_{k+1} \coloneqq 
\bm{x}_k + \alpha_k (T_{\bm{\xi}_k}(\bm{x}_k) - \bm{x}_k)
=
(1 - \alpha_k) \bm{x}_k + \alpha_k T_{\bm{\xi}_k}(\bm{x}_k)$
\STATE
$k \gets k + 1$
\ENDFOR
\end{algorithmic}
\end{algorithm}

We may in theory assume sampling with replacement. In sampling with replacement, even if the batch size $b_k$ exceeds $n$, $T_{\bm{\xi}_k} \neq T$ holds in general. Hence, to examine the convergence of mini-batch algorithms under sampling with replacement, we can use $b_k \to + \infty$ $(k \to + \infty)$.

\subsection{Convergence}
\label{sec:3.1}
Two lemmas, Lemmas \ref{lem:1} and \ref{lem:2}, are needed to show almost-sure convergence of Algorithm \ref{algo:1} (Theorem \ref{thm:1}). Lemma \ref{lem:1} indicates that, under Assumption \ref{assum:1}, the inferior limit of $\|\bm{x}_k - T (\bm{x}_k)\|$ is $0$ almost surely. 

\begin{lemma}\label{lem:1}
Under Assumption \ref{assum:1}, the sequence $(\bm{x}_k)$ generated by Algorithm \ref{algo:1} satisfies that, for all $k \in \{0\} \cup \mathbb{N}$ and all $\bm{x}^\star \in \mathrm{Fix}(T)$,
\begin{align*}
\mathbb{E}_{\bm{\xi}_k} \left[ \| \bm{x}_{k+1} - \bm{x}^\star \|^2 \big| \bm{\xi}_{[k-1]} \right]
\leq 
\| \bm{x}_{k} - \bm{x}^\star \|^2 + \frac{\sigma^2 \alpha_k}{b_k}
- \alpha_k (1 - \alpha_k) \| \bm{x}_k - T (\bm{x}_{k})\|^2,
\end{align*}
where $\bm{\xi}_{[k-1]} \coloneqq \{\bm{\xi}_0, \bm{\xi}_1, \cdots, \bm{\xi}_{k-1} \}$, which implies 
\begin{align*}
\sum_{k=0}^{+ \infty} \frac{\alpha_k}{b_k} < + \infty
\Rightarrow 
\begin{dcases}
\exists \lim_{k \to + \infty} \|\bm{x}_k - \bm{x}^\star \| \text{ a.s.}\\
\sum_{k=0}^{+\infty} \alpha_k (1 - \alpha_k) \| \bm{x}_k - T (\bm{x}_{k})\|^2 < + \infty \text{ a.s.}.
\end{dcases}
\end{align*}
Moreover, 
\begin{align*}
\sum_{k=0}^{+\infty} \alpha_k (1 - \alpha_k) = + \infty
\Rightarrow 
\liminf_{k \to + \infty} \|\bm{x}_k - T (\bm{x}_k) \| = 0 \text{ a.s.}.
\end{align*}
\end{lemma}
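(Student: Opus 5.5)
The plan is to derive the one-step conditional inequality from the standard convex-combination identity and Proposition \ref{prop:1}, and then to feed it into the Robbins--Siegmund almost-supermartingale theorem.

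\textbf{Step 1: the one-step inequality.} Fix $\bm{x}^\star \in \mathrm{Fix}(T)$. Since $\bm{x}_k$ is determined by $\bm{\xi}_{[k-1]}$ and is independent of $\bm{\xi}_k$, Proposition \ref{prop:1} applies conditionally with $\bm{x} = \bm{x}_k$ and $b = b_k$. I will use the identity, valid for all $\bm{u},\bm{v} \in \mathbb{R}^d$ and $\alpha \in [0,1]$,
\begin{align*}
\|(1-\alpha)\bm{u} + \alpha \bm{v}\|^2 = (1-\alpha)\|\bm{u}\|^2 + \alpha\|\bm{v}\|^2 - \alpha(1-\alpha)\|\bm{u}-\bm{v}\|^2 ,
\end{align*}
with $\bm{u} = \bm{x}_k - \bm{x}^\star$ and $\bm{v} = T_{\bm{\xi}_k}(\bm{x}_k) - \bm{x}^\star$. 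Taking $\mathbb{E}_{\bm{\xi}_k}[\,\cdot \mid \bm{\xi}_{[k-1]}]$ gives two terms to bound.
\begin{itemize}
\item By \eqref{app_nonexp} with $\bm{y} = \bm{x}^\star$ and $T(\bm{x}^\star) = \bm{x}^\star$, we get $\mathbb{E}\|\bm{v}\|^2 \le \|\bm{x}_k - \bm{x}^\star\|^2 + \sigma^2/b_k$.
\item By the left inequality of \eqref{app_nonexp_1}, we get $\mathbb{E}\|\bm{u}-\bm{v}\|^2 = \mathbb{E}\|\bm{x}_k - T_{\bm{\xi}_k}(\bm{x}_k)\|^2 \ge \|\bm{x}_k - T(\bm{x}_k)\|^2$.
\end{itemize}
Combining these yields exactly the claimed inequality, with the noise term $\alpha_k \sigma^2/b_k$.

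\textbf{Step 2: almost-sure consequences.} The inequality has the form $\mathbb{E}[V_{k+1} \mid \mathcal{F}_k] \le V_k + a_k - w_k$. Here $V_k = \|\bm{x}_k - \bm{x}^\star\|^2$, $a_k = \sigma^2\alpha_k/b_k$ is deterministic and summable by hypothesis, and $w_k = \alpha_k(1-\alpha_k)\|\bm{x}_k - T(\bm{x}_k)\|^2 \ge 0$. The filtration is $\mathcal{F}_k = \sigma(\bm{\xi}_{[k-1]})$. The Robbins--Siegmund theorem then gives that $V_k$ converges a.s. to a finite limit and that $\sum_k w_k < +\infty$ a.s. This is the first implication. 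Should an elementary route be preferred, one can instead take total expectations to get $\sup_k \mathbb{E} V_k < \infty$ and $\sum_k \mathbb{E} w_k < \infty$. One then applies the supermartingale convergence theorem to $V_k + \sum_{j \ge k} a_j + \sum_{j<k} w_j$.

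\textbf{Step 3: the liminf statement.} On the full-measure event where $\sum_k \alpha_k(1-\alpha_k)\|\bm{x}_k - T(\bm{x}_k)\|^2 < \infty$, argue by contradiction. Suppose $\liminf_k \|\bm{x}_k - T(\bm{x}_k)\| = 2\varepsilon > 0$. Then for all large $k$ the summand is at least $\varepsilon^2 \alpha_k(1-\alpha_k)$, so divergence of $\sum_k \alpha_k(1-\alpha_k)$ contradicts finiteness. Hence the liminf is $0$ a.s.

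Implicit in the ``Moreover'' clause is that the summability hypothesis $\sum_k \alpha_k/b_k < \infty$ from the previous implication is still in force. The main obstacle I expect is the rigorous handling of the conditioning in Step 1. One must justify applying Proposition \ref{prop:1} with the random point $\bm{x}_k$, which is done by freezing $\bm{\xi}_{[k-1]}$ and using the independence of $\bm{\xi}_k$. One must also be careful to state Robbins--Siegmund with $\mathcal{F}_k$-measurable $V_k$ and $w_k$. The algebra itself is routine.
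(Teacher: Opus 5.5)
Your proposal is correct and follows the paper's proof: the same convex-combination identity applied to $(1-\alpha_k)(\bm{x}_k-\bm{x}^\star)+\alpha_k(T_{\bm{\xi}_k}(\bm{x}_k)-\bm{x}^\star)$, the same two bounds from \eqref{app_nonexp} and the left half of \eqref{app_nonexp_1}, the supermartingale (Robbins--Siegmund) convergence theorem, and a contradiction argument for the $\liminf$. Your last step argues pathwise on the full-measure event where the weighted series converges, which is slightly more careful than the paper's version; the paper negates ``$\liminf = 0$ a.s.'' as ``$\liminf > 0$ a.s.''.
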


\begin{proof}
The equation $\|\alpha \bm{x} + (1-\alpha) \bm{y} \|^2 = \alpha \|\bm{x}\|^2 + (1-\alpha) \|\bm{y}\|^2 - \alpha (1-\alpha)\|\bm{x} - \bm{y}\|^2$ $(\bm{x},\bm{y} \in \mathbb{R}^d, \alpha \in \mathbb{R})$ and the definition of $\bm{x}_{k+1}$ (Step 4 of Algorithm \ref{algo:1}) imply that
\begin{align*}
&\mathbb{E}_{\bm{\xi}_k} \left[ \| \bm{x}_{k+1} - \bm{x}^\star \|^2 \big| \bm{\xi}_{[k-1]} \right]\\
&= 
\mathbb{E}_{\bm{\xi}_k} \left[ \| (1-\alpha_k) (\bm{x}_{k} - \bm{x}^\star)
+ \alpha_k (T_{\bm{\xi}_k} (\bm{x}_{k}) - \bm{x}^\star) \|^2 \big| \bm{\xi}_{[k-1]} \right]\\
&=
(1-\alpha_k) \mathbb{E}_{\bm{\xi}_k} \left[ \| \bm{x}_{k} - \bm{x}^\star \|^2 \big| \bm{\xi}_{[k-1]} \right]
+ 
\alpha_k \mathbb{E}_{\bm{\xi}_k} \left[ \| T_{\bm{\xi}_k} (\bm{x}_{k}) - \bm{x}^\star \|^2 \big| \bm{\xi}_{[k-1]} \right]\\
&\quad - \alpha_k (1 - \alpha_k) \mathbb{E}_{\bm{\xi}_k} \left[ \| \bm{x}_k - T_{\bm{\xi}_k} (\bm{x}_{k})\|^2 \big| \bm{\xi}_{[k-1]} \right]. 
\end{align*}
Meanwhile, Proposition \ref{prop:1} with $\bm{y} = \bm{x}^\star = T(\bm{x}^\star)$ ensures that
\begin{align}\label{ineq_prop}
\begin{split}
&\mathbb{E}_{\bm{\xi}_k} \left[ \| T_{\bm{\xi}_k} (\bm{x}_{k}) - \bm{x}^\star \|^2 \big| \bm{\xi}_{[k-1]} \right]
\leq 
\| \bm{x}_{k} - \bm{x}^\star \|^2 + \frac{\sigma^2}{b_k},\\
&\| \bm{x}_k - T (\bm{x}_{k})\|^2
\leq
\mathbb{E}_{\bm{\xi}_k} \left[ \| \bm{x}_k - T_{\bm{\xi}_k} (\bm{x}_{k})\|^2 \big| \bm{\xi}_{[k-1]} \right]
\leq 
\| \bm{x}_k - T (\bm{x}_{k})\|^2 + \frac{\sigma^2}{b_k}.
\end{split}
\end{align} 
$\alpha_k \in (0,1)$ and \eqref{ineq_prop} lead to the finding that
\begin{align*}
&\mathbb{E}_{\bm{\xi}_k} \left[ \| \bm{x}_{k+1} - \bm{x}^\star \|^2 \big| \bm{\xi}_{[k-1]} \right]\\
&\leq 
(1-\alpha_k)  \| \bm{x}_{k} - \bm{x}^\star \|^2 
+ 
\alpha_k \left( \| \bm{x}_{k} - \bm{x}^\star \|^2 + \frac{\sigma^2}{b_k} \right)
- \alpha_k (1 - \alpha_k) \| \bm{x}_k - T (\bm{x}_{k})\|^2\\
&= 
\| \bm{x}_{k} - \bm{x}^\star \|^2 + \frac{\sigma^2 \alpha_k}{b_k}
- \alpha_k (1 - \alpha_k) \| \bm{x}_k - T (\bm{x}_{k})\|^2.
\end{align*}
The super martingale convergence theorem \cite[Proposition 8.2.10]{bert} with $\sum_{k=0}^{+ \infty} \alpha_k/b_k < + \infty$ ensures the existence of the limit of $\|\bm{x}_k - \bm{x}^\star \|$ and $\sum_{k=0}^{+\infty} \alpha_k (1 - \alpha_k) \| \bm{x}_k - T (\bm{x}_{k})\|^2 < + \infty$ with probability $1$. Finally, we prove that, under $\sum_{k=0}^{+\infty} \alpha_k (1 - \alpha_k) = + \infty$ and $\sum_{k=0}^{+\infty} \alpha_k (1 - \alpha_k) \| \bm{x}_k - T (\bm{x}_{k})\|^2 < + \infty$ a.s., $\liminf_{k \to + \infty} \|\bm{x}_k - T (\bm{x}_k)\| = 0$ a.s. by contradiction. Assuming $\liminf_{k \to + \infty} \|\bm{x}_k - T (\bm{x}_k)\| > 0$ a.s. implies that there exist $r > 0$ and $k_0 \in \mathbb{N}$ such that, for all $k \geq k_0$, $\|\bm{x}_k - T (\bm{x}_k)\| \geq r$ a.s.. Then, 
\begin{align*}
+ \infty 
= 
r^2 \sum_{k=0}^{+\infty} \alpha_k (1 - \alpha_k)
\leq
\sum_{k=0}^{+\infty} \alpha_k (1 - \alpha_k) \| \bm{x}_k - T (\bm{x}_{k})\|^2
< + \infty,
\end{align*}
which is a contradiction. This completes the proof.
\qed
\end{proof}

Lemma \ref{lem:2} asserts the almost-sure existence of the limit of $\|\bm{x}_k - T(\bm{x}_k)\|$.

\begin{lemma}\label{lem:2}
Under Assumption \ref{assum:1}, the sequence $(\bm{x}_k)$ generated by Algorithm \ref{algo:1} satisfies that, for all $k \in \{0\} \cup \mathbb{N}$,
\begin{align*}
\mathbb{E}_{\bm{\xi}_k} \left[ \| \bm{x}_{k+1} - T (\bm{x}_{k+1}) \| \big| \bm{\xi}_{[k-1]} \right]
\leq
\| \bm{x}_{k} - T (\bm{x}_{k}) \|
+ 
\frac{2 \sigma}{\sqrt{b_k}},
\end{align*} 
which implies 
\begin{align*}
\sum_{k=0}^{+ \infty} \frac{1}{\sqrt{b_k}} < + \infty
\Rightarrow
\exists \lim_{k \to + \infty} \| \bm{x}_k - T(\bm{x}_k)\| \text{ a.s.}.
\end{align*}
\end{lemma}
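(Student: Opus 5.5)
We will derive a one-step inequality for the residual $\|\bm{x}_{k+1} - T(\bm{x}_{k+1})\|$ and then apply a deterministic-increment version of the supermartingale convergence theorem.

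\emph{Decomposition of the new residual.} By Step 4 of Algorithm \ref{algo:1},
\begin{align*}
\bm{x}_{k+1} - T(\bm{x}_{k+1})
= (1-\alpha_k)(\bm{x}_k - T(\bm{x}_k)) + \alpha_k (T_{\bm{\xi}_k}(\bm{x}_k) - T(\bm{x}_k)) + (T(\bm{x}_k) - T(\bm{x}_{k+1})).
\end{align*}
Nonexpansivity of $T$ (inequality \eqref{nonexp_T}) gives $\|T(\bm{x}_k) - T(\bm{x}_{k+1})\| \leq \|\bm{x}_{k+1} - \bm{x}_k\|$. Moreover,
\begin{align*}
\bm{x}_{k+1} - \bm{x}_k = \alpha_k (T(\bm{x}_k) - \bm{x}_k) + \alpha_k (T_{\bm{\xi}_k}(\bm{x}_k) - T(\bm{x}_k)),
\end{align*}
so $\|\bm{x}_{k+1} - \bm{x}_k\| \leq \alpha_k \|\bm{x}_k - T(\bm{x}_k)\| + \alpha_k \|T_{\bm{\xi}_k}(\bm{x}_k) - T(\bm{x}_k)\|$. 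Combining these bounds with the triangle inequality yields
\begin{align*}
\|\bm{x}_{k+1} - T(\bm{x}_{k+1})\| \leq \|\bm{x}_k - T(\bm{x}_k)\| + 2\alpha_k \|T_{\bm{\xi}_k}(\bm{x}_k) - T(\bm{x}_k)\|.
\end{align*}
This is exactly where the factor $2$ comes from. Since $\alpha_k < 1$, we may replace $2\alpha_k$ by $2$.

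\emph{Conditional expectation.} Take $\mathbb{E}_{\bm{\xi}_k}[\,\cdot \mid \bm{\xi}_{[k-1]}]$ of both sides. The point $\bm{x}_k$ is determined by $\bm{\xi}_{[k-1]}$ and is independent of $\bm{\xi}_k$. Jensen's inequality and Proposition \ref{prop:1}(ii) then give
\begin{align*}
\mathbb{E}_{\bm{\xi}_k}\left[\|T_{\bm{\xi}_k}(\bm{x}_k) - T(\bm{x}_k)\| \,\big|\, \bm{\xi}_{[k-1]}\right] \leq \sqrt{\mathbb{V}_{\bm{\xi}_k}[T_{\bm{\xi}_k}(\bm{x}_k)]} \leq \frac{\sigma}{\sqrt{b_k}}.
\end{align*}
This gives the stated one-step inequality.

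\emph{Almost-sure convergence.} The nonnegative sequence $Y_k := \|\bm{x}_k - T(\bm{x}_k)\|$ satisfies $\mathbb{E}[Y_{k+1} \mid \bm{\xi}_{[k-1]}] \leq Y_k + 2\sigma/\sqrt{b_k}$, where the increments are deterministic and summable. We apply the supermartingale convergence theorem \cite[Proposition 8.2.10]{bert} with zero multiplicative term and zero decrease term, as in the proof of Lemma \ref{lem:1}. It yields that $Y_k$ converges almost surely to a finite limit. Equivalently, $Y_k + \sum_{j \geq k} 2\sigma/\sqrt{b_j}$ is a nonnegative supermartingale, which converges a.s.

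\emph{Main obstacle.} The main point requiring care is the first step. One must route the term $T(\bm{x}_k) - T(\bm{x}_{k+1})$ through nonexpansivity without losing the coefficient $1$ on $\|\bm{x}_k - T(\bm{x}_k)\|$, and this works because the $(1-\alpha_k)$ and $\alpha_k$ coefficients add to $1$. The measurability and independence issues in conditioning are routine given the setup preceding \eqref{mini_batch_t}.
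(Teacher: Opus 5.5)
Your proof is correct, and it takes a different route from the paper's.

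\textbf{Your route.} You insert $T(\bm{x}_k)$ and obtain the pathwise bound $\|\bm{x}_{k+1}-T(\bm{x}_{k+1})\| \le \|\bm{x}_k - T(\bm{x}_k)\| + 2\alpha_k\|T_{\bm{\xi}_k}(\bm{x}_k)-T(\bm{x}_k)\|$ before taking any expectation. The only probabilistic input is then a single use of Jensen's inequality and Proposition~\ref{prop:1}(ii).

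\textbf{The paper's route.} The paper splits around $T(\bm{x}_{k+1})$, writing $\bm{x}_{k+1}-T(\bm{x}_{k+1}) = (1-\alpha_k)(\bm{x}_k - T(\bm{x}_{k+1})) + \alpha_k(T_{\bm{\xi}_k}(\bm{x}_k)-T(\bm{x}_{k+1}))$. It bounds $\|\bm{x}_k - T(\bm{x}_{k+1})\|$ via the triangle inequality through $\bm{x}_{k+1}$. The conditional expectation of the new residual then reappears on the right with coefficient $1-\alpha_k$. The paper absorbs it into the left-hand side and divides by $\alpha_k$. Only at the end does it apply the triangle inequality to $\mathbb{E}_{\bm{\xi}_k}[\|\bm{x}_k - T_{\bm{\xi}_k}(\bm{x}_k)\| \mid \bm{\xi}_{[k-1]}]$.

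\textbf{What your route buys.}
\begin{itemize}
\item It is shorter, and it avoids the absorption step, which tacitly requires that conditional expectation to be finite.
\item It shows the estimate is a noisy version of the monotonicity of the residual for the deterministic iteration \eqref{Kras_Mann}: setting $\sigma=0$ gives $\|\bm{x}_{k+1}-T(\bm{x}_{k+1})\|\le\|\bm{x}_k-T(\bm{x}_k)\|$.
\item It keeps the factor $\alpha_k$ on the noise term, which the paper's division by $\alpha_k$ throws away. If you do not replace $2\alpha_k$ by $2$, your argument gives almost-sure existence of $\lim_k\|\bm{x}_k-T(\bm{x}_k)\|$ under $\sum_k \alpha_k/\sqrt{b_k}<+\infty$. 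This condition is implied by the paper's $\sum_k 1/\sqrt{b_k}<+\infty$, and is strictly weaker in general when diminishing step sizes are used.
\end{itemize}

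The final supermartingale step is the same in both proofs. Your equivalent reformulation via the nonnegative supermartingale $Y_k+\sum_{j\ge k}2\sigma/\sqrt{b_j}$ is also correct.
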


\begin{proof}
The triangle inequality implies that
\begin{align*}
&\mathbb{E}_{\bm{\xi}_k} \left[ \| \bm{x}_{k+1} - T (\bm{x}_{k+1}) \| \big| \bm{\xi}_{[k-1]} \right]\\
&=
\mathbb{E}_{\bm{\xi}_k} \left[ \| (1-\alpha_k) (\bm{x}_{k} - T (\bm{x}_{k+1})) + \alpha_k (T_{\bm{\xi}_k} (\bm{x}_{k}) - T (\bm{x}_{k+1})) \| \big| \bm{\xi}_{[k-1]} \right]\\
&\leq
(1-\alpha_k) \mathbb{E}_{\bm{\xi}_k} \left[ \| \bm{x}_{k} - T (\bm{x}_{k+1}) \| \big| \bm{\xi}_{[k-1]} \right]
+ \alpha_k \mathbb{E}_{\bm{\xi}_k} \left[ \| T_{\bm{\xi}_k} (\bm{x}_{k}) - T (\bm{x}_{k+1}) \| \big| \bm{\xi}_{[k-1]} \right]\\
&\leq
(1-\alpha_k) \mathbb{E}_{\bm{\xi}_k} \left[ \| \bm{x}_{k} - T (\bm{x}_{k+1}) \| \big| \bm{\xi}_{[k-1]} \right]
+ \alpha_k \underbrace{\mathbb{E}_{\bm{\xi}_k} \left[ \| T_{\bm{\xi}_k} (\bm{x}_{k}) - T (\bm{x}_{k}) \| \big| \bm{\xi}_{[k-1]} \right]}_{\leq \sqrt{\mathbb{V}_{\bm{\xi}_k} [ T_{\bm{\xi}_k} (\bm{x}_{k}) | \bm{\xi}_{[k-1]}]}}\\
&\quad 
+ \alpha_k 
\underbrace{\mathbb{E}_{\bm{\xi}_k} \left[ \| T (\bm{x}_{k}) - T (\bm{x}_{k+1}) \| \big| \bm{\xi}_{[k-1]} \right]}_{X_k},
\end{align*}
where 
\begin{align}\label{jensen}
\begin{split}
\left( \mathbb{E}_{\bm{\xi}_k} \left[ \| T_{\bm{\xi}_k} (\bm{x}_{k}) - T (\bm{x}_{k}) \| \big| \bm{\xi}_{[k-1]} \right] \right)^2 
&\leq 
\mathbb{E}_{\bm{\xi}_k} \left[ \| T_{\bm{\xi}_k} (\bm{x}_{k}) - T (\bm{x}_{k}) \|^2 \big| \bm{\xi}_{[k-1]} \right]\\
&= 
\mathbb{V}_{\bm{\xi}_k} \left[ T_{\bm{\xi}_k} (\bm{x}_{k}) \big| \bm{\xi}_{[k-1]} \right]
\end{split}
\end{align}
comes from Jensen's inequality and Proposition \ref{prop:1}(i). From the nonexpansivity \eqref{nonexp_T} of $T$ and the definition of $\bm{x}_{k+1}$ (Step 4 of Algorithm \ref{algo:1}), 
\begin{align*}
X_k 
\leq
\mathbb{E}_{\bm{\xi}_k} \left[ \| \bm{x}_{k} - \bm{x}_{k+1} \| \big| \bm{\xi}_{[k-1]} \right]
= \alpha_k \mathbb{E}_{\bm{\xi}_k} \left[ \| \bm{x}_{k} - T_{\bm{\xi}_k} (\bm{x}_{k}) \| \big| \bm{\xi}_{[k-1]} \right].
\end{align*}
Hence, we have 
\begin{align*}
&\mathbb{E}_{\bm{\xi}_k} \left[ \| \bm{x}_{k+1} - T (\bm{x}_{k+1}) \| \big| \bm{\xi}_{[k-1]} \right]\\
&\leq
(1-\alpha_k) \underbrace{\mathbb{E}_{\bm{\xi}_k} \left[ \| \bm{x}_{k} - T (\bm{x}_{k+1}) \| \big| \bm{\xi}_{[k-1]} \right]}_{Y_k}
+ \alpha_k \sqrt{\mathbb{V}_{\bm{\xi}_k} \left[ T_{\bm{\xi}_k} (\bm{x}_{k}) \big| \bm{\xi}_{[k-1]} \right]}\\
&\quad 
+ \alpha_k \mathbb{E}_{\bm{\xi}_k} \left[ \| \bm{x}_{k} - \bm{x}_{k+1} \| \big| \bm{\xi}_{[k-1]} \right].
\end{align*}
Moreover, the triangle inequality implies that
\begin{align*}
Y_k 
&\leq
\mathbb{E}_{\bm{\xi}_k} \left[ \| \bm{x}_{k} -  \bm{x}_{k+1} \| \big| \bm{\xi}_{[k-1]} \right]
+
\mathbb{E}_{\bm{\xi}_k} \left[ \| \bm{x}_{k+1} -  T (\bm{x}_{k+1}) \| \big| \bm{\xi}_{[k-1]} \right].
\end{align*}
Accordingly, we have 
\begin{align*}
&\mathbb{E}_{\bm{\xi}_k} \left[ \| \bm{x}_{k+1} - T (\bm{x}_{k+1}) \| \big| \bm{\xi}_{[k-1]} \right]\\
&\leq
(1-\alpha_k) 
\left\{
\mathbb{E}_{\bm{\xi}_k} \left[ \| \bm{x}_{k} - \bm{x}_{k+1} \| \big| \bm{\xi}_{[k-1]} \right]
+
\mathbb{E}_{\bm{\xi}_k} \left[ \| \bm{x}_{k+1} -  T (\bm{x}_{k+1}) \| \big| \bm{\xi}_{[k-1]} \right]
\right\}\\
&\quad + \alpha_k \sqrt{\mathbb{V}_{\bm{\xi}_k} \left[ T_{\bm{\xi}_k} (\bm{x}_{k}) \big| \bm{\xi}_{[k-1]} \right]} 
+ \alpha_k \mathbb{E}_{\bm{\xi}_k} \left[ \| \bm{x}_{k} - \bm{x}_{k+1} \| \big| \bm{\xi}_{[k-1]} \right]\\
&= 
(1 - \alpha_k)\mathbb{E}_{\bm{\xi}_k} \left[ \| \bm{x}_{k+1} -  T (\bm{x}_{k+1}) \| \big| \bm{\xi}_{[k-1]} \right]
+ \alpha_k \sqrt{\mathbb{V}_{\bm{\xi}_k} \left[ T_{\bm{\xi}_k} (\bm{x}_{k}) \big| \bm{\xi}_{[k-1]} \right]}\\
&\quad 
+ \alpha_k \mathbb{E}_{\bm{\xi}_k} \left[ \| \bm{x}_{k} - T_{\bm{\xi}_k} (\bm{x}_{k}) \| \big| \bm{\xi}_{[k-1]} \right],
\end{align*}
which implies
\begin{align*}
&\mathbb{E}_{\bm{\xi}_k} \left[ \| \bm{x}_{k+1} - T (\bm{x}_{k+1}) \| \big| \bm{\xi}_{[k-1]} \right]\\
&\leq 
\sqrt{\mathbb{V}_{\bm{\xi}_k} \left[ T_{\bm{\xi}_k} (\bm{x}_{k}) \big| \bm{\xi}_{[k-1]} \right]}
+ 
\underbrace{\mathbb{E}_{\bm{\xi}_k} \left[ \| \bm{x}_{k} - T_{\bm{\xi}_k} (\bm{x}_{k}) \| \big| \bm{\xi}_{[k-1]} \right]}_{Z_k}.
\end{align*} 
The triangle inequality and \eqref{jensen} thus ensure that 
\begin{align*}
Z_k 
&\leq
\mathbb{E}_{\bm{\xi}_k} \left[ \| \bm{x}_{k} - T (\bm{x}_{k}) \| \big| \bm{\xi}_{[k-1]} \right]
+
\mathbb{E}_{\bm{\xi}_k} \left[ \| T (\bm{x}_{k}) - T_{\bm{\xi}_k} (\bm{x}_{k}) \| \big| \bm{\xi}_{[k-1]} \right]\\
&=
\| \bm{x}_{k} - T (\bm{x}_{k}) \|
+ 
\sqrt{\mathbb{V}_{\bm{\xi}_k} \left[ T_{\bm{\xi}_k} (\bm{x}_{k}) \big| \bm{\xi}_{[k-1]} \right]}. 
\end{align*}
Therefore, from Proposition \ref{prop:1}(ii), 
\begin{align*}
\mathbb{E}_{\bm{\xi}_k} \left[ \| \bm{x}_{k+1} - T (\bm{x}_{k+1}) \| \big| \bm{\xi}_{[k-1]} \right]
&\leq
\| \bm{x}_{k} - T (\bm{x}_{k}) \|
+ 
2 \sqrt{\mathbb{V}_{\bm{\xi}_k} \left[ T_{\bm{\xi}_k} (\bm{x}_{k}) \big| \bm{\xi}_{[k-1]} \right]}\\
&\leq
\| \bm{x}_{k} - T (\bm{x}_{k}) \|
+ 
\frac{2 \sigma}{\sqrt{b_k}}. 
\end{align*}
The super martingale convergence theorem \cite[Proposition 8.2.10]{bert} with $\sum_{k=0}^{+ \infty} 1/\sqrt{b_k} < + \infty$ ensures the existence of the limit of $\|\bm{x}_k - T (\bm{x}_k) \|$ with probability $1$. This completes the proof.
\qed
\end{proof}

Lemmas \ref{lem:1} and \ref{lem:2} lead to the following.

\begin{theorem}\label{thm:1}
Under Assumption \ref{assum:1}, the sequence $(\bm{x}_k)$ generated by Algorithm \ref{algo:1} with $(\alpha_k)$ and $(b_k)$ satisfying 
\begin{align}\label{condition_1}
\sum_{k=0}^{+\infty} \alpha_k (1 - \alpha_k) = + \infty \text{ and }
\sum_{k=0}^{+ \infty} \frac{1}{\sqrt{b_k}} < + \infty
\end{align}
converges almost surely to a fixed point of $T$.
\end{theorem}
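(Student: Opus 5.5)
The plan is to combine Lemmas \ref{lem:1} and \ref{lem:2} in the standard Opial-type argument, made almost sure by a countable dense subset of $\mathrm{Fix}(T)$. First, $\sum_{k} 1/\sqrt{b_k} < +\infty$ forces $b_k \geq 1$ and $b_k \to +\infty$. Hence $1/b_k \leq 1/\sqrt{b_k}$, and since $\alpha_k \in (0,1)$ we get $\sum_k \alpha_k/b_k < +\infty$. This gives the hypothesis of Lemma \ref{lem:1}.

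So for each fixed $\bm{x}^\star \in \mathrm{Fix}(T)$, the limit $\lim_k \|\bm{x}_k - \bm{x}^\star\|$ exists almost surely. Together with $\sum_k \alpha_k(1-\alpha_k) = +\infty$, Lemma \ref{lem:1} also gives $\liminf_k \|\bm{x}_k - T(\bm{x}_k)\| = 0$ a.s. Lemma \ref{lem:2} gives that $\lim_k \|\bm{x}_k - T(\bm{x}_k)\|$ exists a.s. Combining the two, $\lim_k \|\bm{x}_k - T(\bm{x}_k)\| = 0$ a.s. I assume $\mathrm{Fix}(T) \neq \emptyset$, which is implicit in Problem \ref{prob:1}.

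The main obstacle is that the null set in Lemma \ref{lem:1} depends on $\bm{x}^\star$, and $\mathrm{Fix}(T)$ is uncountable. I handle this as follows.
\begin{itemize}
\item $\mathrm{Fix}(T)$ is closed by the nonexpansivity \eqref{nonexp_T}. Take a countable dense subset $D \subset \mathrm{Fix}(T)$.
\item Intersect the countably many probability-one events: for every $\bm{z} \in D$, $\lim_k \|\bm{x}_k - \bm{z}\|$ exists, and $\|\bm{x}_k - T(\bm{x}_k)\| \to 0$. Call this event $\Omega_0$; it has probability one.
\item On $\Omega_0$, extend to all $\bm{x}^\star \in \mathrm{Fix}(T)$. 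For $\bm{z} \in D$ with $\|\bm{z} - \bm{x}^\star\| \leq \epsilon$, the triangle inequality gives
\[
\limsup_k \|\bm{x}_k - \bm{x}^\star\| - \liminf_k \|\bm{x}_k - \bm{x}^\star\| \leq 2\epsilon .
\]
Letting $\epsilon \to 0$, the limit exists for every fixed point.
\end{itemize}

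Now fix $\omega \in \Omega_0$. Pick any $\bm{z} \in D$; since $\|\bm{x}_k - \bm{z}\|$ converges, $(\bm{x}_k)$ is bounded. By Bolzano--Weierstrass in $\mathbb{R}^d$, some subsequence $\bm{x}_{k_j} \to \bar{\bm{x}}$. The map $\mathrm{Id} - T$ is continuous because $T$ is nonexpansive, so $\|\bar{\bm{x}} - T(\bar{\bm{x}})\| = \lim_j \|\bm{x}_{k_j} - T(\bm{x}_{k_j})\| = 0$. Thus $\bar{\bm{x}} \in \mathrm{Fix}(T)$.

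By the previous step, $\lim_k \|\bm{x}_k - \bar{\bm{x}}\|$ exists. Along the subsequence this limit equals $0$, so $\lim_k \|\bm{x}_k - \bar{\bm{x}}\| = 0$. Hence the whole sequence $(\bm{x}_k)$ converges to $\bar{\bm{x}} \in \mathrm{Fix}(T)$. This holds for every $\omega \in \Omega_0$, so $(\bm{x}_k)$ converges almost surely to a (random) fixed point of $T$.
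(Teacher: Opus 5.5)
Your proof is correct and follows the same route as the paper. Lemma~\ref{lem:1} (via $\sum_k \alpha_k/b_k \leq \sum_k 1/\sqrt{b_k} < +\infty$) together with Lemma~\ref{lem:2} gives $\|\bm{x}_k - T(\bm{x}_k)\| \to 0$ a.s.; boundedness then yields a cluster point $\bar{\bm{x}} \in \mathrm{Fix}(T)$, and the existence of $\lim_k \|\bm{x}_k - \bar{\bm{x}}\|$ forces the whole sequence to converge. Your countable dense subset step actually repairs a gap in the paper, which applies Lemma~\ref{lem:1} directly to the random point $\bar{\bm{x}}$ even though the exceptional null set there depends on the fixed point; your pathwise argument on the single full-measure event $\Omega_0$ also makes the paper's second-subsequence argument unnecessary.
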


\begin{proof}
Let $\bm{x}^\star \in \mathrm{Fix}(T)$. From $\sum_{k=0}^{+ \infty} \alpha_k/b_k \leq \sum_{k=0}^{+ \infty} 1/\sqrt{b_k} < + \infty$, Lemma \ref{lem:1} (the almost-sure existence of $\lim_{k \to + \infty} \|\bm{x}_k - \bm{x}^\star\|$) provides the almost-sure boundedness of $(\bm{x}_k)$. Hence, there exists a subsequence $(\bm{x}_{k_i})$ of $(\bm{x}_k)$ such that $(\bm{x}_{k_i})$ converges almost surely. Meanwhile, Lemmas \ref{lem:1} and \ref{lem:2} imply that 
\begin{align*}
\lim_{k \to + \infty} \|\bm{x}_k - T (\bm{x}_k)\| = 0 \text{ a.s.}. 
\end{align*}
Accordingly, by letting $\bar{\bm{x}}$ be a convergent random variable of $(\bm{x}_{k_i})$, the continuity of $T$ ensures that $\bar{\bm{x}} \in \mathrm{Fix}(T)$ a.s.. In addition, Lemma \ref{lem:1} leads to the almost-sure existence of $\lim_{k \to + \infty} \|\bm{x}_k - \bar{\bm{x}}\|$. We also have another subsequence $(\bm{x}_{k_j})$ of $(\bm{x}_k)$ such that $(\bm{x}_{k_j})$ converges almost surely to a random variable $\hat{\bm{x}}$. A similar argument to the one above for obtaining $\bar{\bm{x}} \in \mathrm{Fix}(T)$ and the almost-sure existence of $\lim_{k \to + \infty} \|\bm{x}_k - \bar{\bm{x}}\|$ implies that $\hat{\bm{x}} \in \mathrm{Fix}(T)$ and the almost-sure existence of $\lim_{k \to + \infty} \|\bm{x}_k - \hat{\bm{x}}\|$. Therefore, we have 
\begin{align*}
&\lim_{k \to + \infty} \| \bm{x}_k - \bar{\bm{x}} \| = \lim_{i \to + \infty} \|\bm{x}_{k_i} - \bar{\bm{x}}\| = 0 \text{ a.s.},\\ 
&\lim_{k \to + \infty} \| \bm{x}_k - \hat{\bm{x}} \| = \lim_{j \to + \infty} \|\bm{x}_{k_j} - \hat{\bm{x}}\| = 0 \text{ a.s.}.
\end{align*}
Since the triangle inequality ensures that $\| \bar{\bm{x}} - \hat{\bm{x}} \| \leq \| \bar{\bm{x}} - \bm{x}_k \| + \| \bm{x}_k - \hat{\bm{x}} \|$, we have that $\bar{\bm{x}} = \hat{\bm{x}}$ a.s.. That is, $(\bm{x}_k)$ converges almost surely to $\bar{\bm{x}} \in \mathrm{Fix}(T)$, which completes the proof. 
\qed
\end{proof}

Let us examine some examples that satisfy \eqref{condition_1}. A practically used step size is 
\begin{align}\label{constant_1}
\text{[Constant step size] }
\alpha_k = \alpha \in (0,1),
\end{align}
which satisfies $\sum_{k=0}^{+ \infty} \alpha_k (1 - \alpha_k) = + \infty$ from
\begin{align}\label{constant}
\sum_{k=0}^{K-1} \alpha_k (1 - \alpha_k) 
= \sum_{k=0}^{K-1} \alpha (1 - \alpha) 
= \alpha (1 - \alpha) K.
\end{align}
Another step size is 
\begin{align}\label{diminishing_1}
\text{[Diminishing step size] }
\alpha_k = \frac{1}{(k+1)^a} \in (0,1],
\end{align}
where $a \in (0,1]$, which satisfies $\sum_{k=0}^{+ \infty} \alpha_k (1 - \alpha_k) = + \infty$, from
\begin{align}\label{diminishing}
\sum_{k=0}^{K-1} \alpha_k (1 - \alpha_k)
\geq 
\begin{dcases}
\frac{(K+1)^{1-a} -1}{1-a} - \frac{K^{1-2a}}{1-2a} 
&\text{ } \left(a \in \left(0, \frac{1}{2} \right) \right)\\
2 \sqrt{K+1} - \log K - 3
&\text{ } \left(a = \frac{1}{2} \right)\\
\frac{(K+1)^{1-a} -1}{1 - a} - \frac{2a}{2a -1} 
&\text{ } \left(a \in \left(\frac{1}{2}, 1 \right) \right)\\
\log (K+1) - 2  
&\text{ } \left(a =1 \right).
\end{dcases}
\end{align}
An increasing batch size $b_k$ such as 
\begin{align}\label{increasing}
\begin{split}
&\text{[Polynomial increasing batch size] }
b_k = (a k + b_0)^c, \text{ or}\\
&\text{[Exponential increasing batch size] }
b_k = b_0 \delta^k,
\end{split}
\end{align} 
where $a > 0$, $c > 1$, and $\delta > 1$, satisfies $\sum_{k=0}^{+ \infty} 1/\sqrt{b_k} < + \infty$, from
\begin{align}\label{B_value}
\sum_{k=0}^{K-1} \frac{1}{\sqrt{b_k}}
\leq
B 
\coloneqq
\begin{dcases}
\frac{2c-1}{(c-1) \min \{ a, b_0 \}} &\text{ (Polynomial increasing batch size)}\\
\frac{\delta}{(\delta-1) b_0} &\text{ (Exponential increasing batch size)}. 
\end{dcases}
\end{align}
However, a constant batch size $b_k = b$ does not satisfy $\sum_{k=0}^{+ \infty} 1/\sqrt{b_k} < + \infty$. Hence, Theorem \ref{thm:1} says that the use of increasing batch sizes is essential for guaranteeing the convergence of Algorithm \ref{algo:1}. This claim also appears in the convergence analysis of mini-batch SGD \cite{umeda2025increasing}.

\subsection{Convergence rate}
\label{sec:3.2}
Lemma \ref{lem:1} is used here to prove the following theorem on the rate of convergence of Algorithm \ref{algo:1}.

\begin{theorem}\label{thm:2}
Under Assumption \ref{assum:1} and \eqref{condition_1}, the sequence $(\bm{x}_k)$ generated by Algorithm \ref{algo:1} satisfies that, for all $K \in \mathbb{N}$,
\begin{align*}
\min_{k \in [0:K-1]} \mathbb{E} \left[ \| \bm{x}_k - T (\bm{x}_{k})\| \right]
&= 
O \left( \frac{1}{\sqrt{\sum_{k=0}^{K-1} \alpha_k (1 - \alpha_k)}} \right)\\
&\leq
\frac{1}{\sqrt{\sum_{k=0}^{K-1} \alpha_k (1 - \alpha_k)}}
\sqrt{ \| \bm{x}_{0} - \bm{x}^\star \|^2 
+ 
\sigma^2 \sum_{k=0}^{+ \infty} \frac{\alpha_k}{b_k}
}.
\end{align*}
\end{theorem}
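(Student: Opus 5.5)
The plan is to take total expectations in the one-step inequality of Lemma \ref{lem:1}, telescope the result over $k = 0, \ldots, K-1$, and then pass from a bound on second moments to the first moment by Jensen's inequality. Fix $\bm{x}^\star \in \mathrm{Fix}(T)$. Lemma \ref{lem:1} gives, for every $k$,
\begin{align*}
\mathbb{E}_{\bm{\xi}_k} \left[ \| \bm{x}_{k+1} - \bm{x}^\star \|^2 \big| \bm{\xi}_{[k-1]} \right]
\leq
\| \bm{x}_{k} - \bm{x}^\star \|^2 + \frac{\sigma^2 \alpha_k}{b_k}
- \alpha_k (1 - \alpha_k) \| \bm{x}_k - T (\bm{x}_{k})\|^2 .
\end{align*}
Applying the total expectation $\mathbb{E} = \mathbb{E}_{\bm{\xi}_0} \cdots \mathbb{E}_{\bm{\xi}_k}$ (the tower property, using that $\bm{x}_k$ depends only on $\bm{\xi}_{[k-1]}$) yields the unconditional inequality
\begin{align*}
\alpha_k (1-\alpha_k)\, \mathbb{E}\left[\| \bm{x}_k - T (\bm{x}_{k})\|^2\right]
\leq
\mathbb{E}\left[\| \bm{x}_{k} - \bm{x}^\star \|^2\right] - \mathbb{E}\left[\| \bm{x}_{k+1} - \bm{x}^\star \|^2\right] + \frac{\sigma^2 \alpha_k}{b_k}.
\end{align*}

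Summing from $k=0$ to $K-1$, the distance terms telescope. We drop the nonpositive term $-\mathbb{E}[\|\bm{x}_K - \bm{x}^\star\|^2]$ and bound the finite noise sum by the full series $\sigma^2 \sum_{k=0}^{+\infty} \alpha_k/b_k$. This series is finite under \eqref{condition_1}, since $\alpha_k/b_k \leq 1/b_k \leq 1/\sqrt{b_k}$. We then lower bound the left-hand side by $\min_{k \in [0:K-1]} \mathbb{E}[\|\bm{x}_k - T(\bm{x}_k)\|^2]$ times $\sum_{k=0}^{K-1}\alpha_k(1-\alpha_k)$. This is valid because every weight $\alpha_k(1-\alpha_k)$ is positive, which makes the weighted sum at least the minimum times the total weight.

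Finally, Jensen's inequality gives $(\mathbb{E}[\|\bm{x}_k - T(\bm{x}_k)\|])^2 \leq \mathbb{E}[\|\bm{x}_k - T(\bm{x}_k)\|^2]$ for each $k$. Hence the minimum of the first moments is at most the square root of the minimum of the second moments, since the square root is monotone. Dividing by $\sum_{k=0}^{K-1}\alpha_k(1-\alpha_k)$ and taking square roots gives exactly the stated explicit bound. The $O$-statement follows because the numerator $\sqrt{\|\bm{x}_0 - \bm{x}^\star\|^2 + \sigma^2 \sum_k \alpha_k/b_k}$ is a finite constant independent of $K$.

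The only delicate step is the passage from the conditional inequality of Lemma \ref{lem:1} to total expectations. One must check that $\mathbb{E}[\|\bm{x}_k - \bm{x}^\star\|^2]$ is finite at each step, so that the subtraction in the telescoping is legitimate. I would settle this by induction on $k$, using the same inequality with the nonpositive term dropped: it gives $\mathbb{E}[\|\bm{x}_{k+1} - \bm{x}^\star\|^2] \leq \|\bm{x}_0 - \bm{x}^\star\|^2 + \sigma^2 \sum_{j\le k} \alpha_j/b_j < +\infty$. Everything else is routine rearrangement.
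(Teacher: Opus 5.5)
Your proposal is correct and follows essentially the same route as the paper: take total expectations in the one-step inequality of Lemma~\ref{lem:1}, telescope over $k=0,\dots,K-1$, drop $-\mathbb{E}[\|\bm{x}_K-\bm{x}^\star\|^2]$, bound the weighted sum below by the minimum times $\sum_{k=0}^{K-1}\alpha_k(1-\alpha_k)$, and finish with Jensen's inequality. Two points the paper leaves implicit are made explicit in your version: the induction showing $\mathbb{E}[\|\bm{x}_k-\bm{x}^\star\|^2]<+\infty$, and the replacement of the partial noise sum by the full series $\sum_{k=0}^{+\infty}\alpha_k/b_k$ as it appears in the statement.
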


\begin{proof}
Let $\bm{x}^\star \in \mathrm{Fix}(T)$. Lemma \ref{lem:1} ensures that, for all $k \in \{0\} \cup \mathbb{N}$,
\begin{align*}
\mathbb{E} \left[ \| \bm{x}_{k+1} - \bm{x}^\star \|^2 \right]
\leq 
\mathbb{E} \left[ \| \bm{x}_{k} - \bm{x}^\star \|^2 \right] + \frac{\sigma^2 \alpha_k}{b_k}
- \alpha_k (1 - \alpha_k) \mathbb{E} \left[ \| \bm{x}_k - T (\bm{x}_{k})\|^2 \right].
\end{align*}
Let $K \in \mathbb{N}$. Summing the above inequality from $k = 0$ to $k = K-1$ leads to the finding that 
\begin{align*}
&
\min_{k \in [0:K-1]} \mathbb{E} \left[ \| \bm{x}_k - T (\bm{x}_{k})\|^2 \right]
\sum_{k=0}^{K-1} \alpha_k (1 - \alpha_k)
\leq
\sum_{k=0}^{K-1} \alpha_k (1 - \alpha_k) \mathbb{E} \left[ \| \bm{x}_k - T (\bm{x}_{k})\|^2 \right]\\
&\leq
\sum_{k=0}^{K-1} \left\{ \mathbb{E} \left[ \| \bm{x}_{k} - \bm{x}^\star \|^2 \right]
- 
\mathbb{E} \left[ \| \bm{x}_{k+1} - \bm{x}^\star \|^2 \right] 
\right\}
+ 
\sigma^2 \sum_{k=0}^{K-1} \frac{\alpha_k}{b_k}\\
&= 
\mathbb{E} \left[ \| \bm{x}_{0} - \bm{x}^\star \|^2 \right]
- 
\mathbb{E} \left[ \| \bm{x}_{K} - \bm{x}^\star \|^2 \right]
+ 
\sigma^2 \sum_{k=0}^{K-1} \frac{\alpha_k}{b_k}\\
&\leq 
\| \bm{x}_{0} - \bm{x}^\star \|^2 
+ 
\sigma^2 \sum_{k=0}^{K-1} \frac{\alpha_k}{b_k}, 
\end{align*} 
which implies that
\begin{align*}
\min_{k \in [0:K-1]} \mathbb{E} \left[ \| \bm{x}_k - T (\bm{x}_{k})\|^2 \right]
\leq
\frac{1}{\sum_{k=0}^{K-1} \alpha_k (1 - \alpha_k)}
\left( \| \bm{x}_{0} - \bm{x}^\star \|^2 
+ 
\sigma^2 \sum_{k=0}^{K-1} \frac{\alpha_k}{b_k}
\right).
\end{align*}
The assertion of Theorem \ref{thm:2} follows from Jensen’s inequality, which ensures that $(\mathbb{E} [ \| \bm{x}_k - T (\bm{x}_{k})\|])^2 \leq \mathbb{E} [ \| \bm{x}_k - T (\bm{x}_{k})\|^2 ]$.
\qed 
\end{proof}

\subsubsection{Concrete convergence rates}
\label{sec:3.2.1}
Theorem \ref{thm:2} with \eqref{constant} and \eqref{B_value} indicates that Algorithm \ref{algo:1} using a constant step size defined by \eqref{constant_1} and an increasing batch size defined by \eqref{increasing} has the following convergence rate:
\begin{align}\label{rate}
\begin{split}
\min_{k \in [0:K-1]} \mathbb{E} \left[ \| \bm{x}_k - T (\bm{x}_{k})\| \right]
= O \left(\frac{1}{\sqrt{K}} \right)
\leq
\sqrt{
\frac{\| \bm{x}_{0} - \bm{x}^\star \|^2 
+ 
\sigma^2 \alpha B}{\alpha (1 - \alpha) K}
},
\end{split}
\end{align}
where $B$ is defined as in \eqref{B_value}. Meanwhile, Theorem \ref{thm:2} with \eqref{diminishing} and \eqref{B_value} indicates that Algorithm \ref{algo:1} using a diminishing step size defined by \eqref{diminishing_1} and an increasing batch size defined by \eqref{increasing} has the following convergence rate:
\begin{align*}
\min_{k \in [0:K-1]} \mathbb{E} \left[ \| \bm{x}_k - T (\bm{x}_{k})\| \right]
= 
\begin{dcases}
O \left( \frac{1}{\sqrt{K^{1-2a}}} \right) 
&\text{ } \left(a \in \left(0, \frac{1}{2} \right) \right)\\
O \left( \frac{1}{\sqrt{\sqrt{K} - \log K}} \right)
&\text{ } \left(a = \frac{1}{2} \right)\\
O \left( \frac{1}{\sqrt{K^{1-a}}} \right)
&\text{ } \left(a \in \left(\frac{1}{2}, 1 \right) \right)\\
O \left( \frac{1}{\sqrt{\log K}} \right)  
&\text{ } \left(a =1 \right).
\end{dcases}
\end{align*}
Hence, Theorem \ref{thm:2} with \eqref{rate} says that constant step sizes are essential for guaranteeing a fast rate $O(1/\sqrt{K})$ of convergence of Algorithm \ref{algo:1}.

Theorem \ref{thm:2} further ensures that, even if a constant batch size $b$ is used, we have 
\begin{align*}
\min_{k \in [0:K-1]} \mathbb{E} \left[ \| \bm{x}_k - T (\bm{x}_{k})\|^2 \right]
\leq
\frac{1}{\sum_{k=0}^{K-1} \alpha_k (1 - \alpha_k)}
\left( \| \bm{x}_{0} - \bm{x}^\star \|^2 
+ 
\frac{\sigma^2}{b} \sum_{k=0}^{K-1} \alpha_k
\right).
\end{align*}
However, from $1 \leq \sum_{k=0}^{K-1} \alpha_k / \sum_{k=0}^{K-1} \alpha_k (1 - \alpha_k)$, the use of constant batch sizes does not guarantee convergence of $\min_{k \in [0:K-1]} \mathbb{E} [ \| \bm{x}_k - T (\bm{x}_{k})\|]$ to $0$, as is evident as well from Theorem \ref{thm:1}.

\subsection{Applying Theorems \ref{thm:1} and \ref{thm:2} to Example \ref{exp:1}} 
Let us apply our results, Theorems \ref{thm:1} and \ref{thm:2} and \eqref{rate}, in the case of a constant step size $\alpha$ defined by \eqref{constant} and an increasing batch size $b_k$ defined by \eqref{increasing} to the problems in Example \ref{exp:1}.

\textit{Example \ref{exp:1}}(i) [Convex feasibility problem] The sequence $(\bm{x}_k)$ generated for all $k \in \{0\} \cup \mathbb{N}$ by 
\begin{align*}
\bm{x}_{k+1} = (1 - \alpha) \bm{x}_k + \frac{\alpha}{b_k} \sum_{i=1}^{b_k} P_{\xi_{k,i}} (\bm{x}_k) 
\end{align*}
converges almost surely to a point in $C = \bigcap_{i=1}^n C_i$ (i.e., a fixed point of $T$ defined by \eqref{cfp_t}) with the following convergence rate:
\begin{align*}
\min_{k \in [0:K-1]} \mathbb{E} \left[  \left\| \frac{1}{n} \sum_{i=1}^n \left( \bm{x}_k - P_i (\bm{x}_k) \right) \right\| \right] 
&= O \left( \frac{1}{\sqrt{K}} \right)\\
&\leq
\sqrt{ 
\frac{\| \bm{x}_{0} - \bm{x}^\star \|^2 
+ 
(r + \|\bm{x}^\star\|)^2 \alpha B}{\alpha (1 - \alpha) K}}.
\end{align*} 

\textit{Example \ref{exp:1}}(ii) [Zero point problem] The sequence $(\bm{x}_k)$ generated for all $k \in \{0\} \cup \mathbb{N}$ by 
\begin{align*}
\bm{x}_{k+1} = \bm{x}_k - \frac{\alpha \beta}{b_k} \sum_{i=1}^{b_k} A_{\xi_{k,i}} (\bm{x}_k) 
\end{align*}
converges almost surely to a point in $\bigcap_{i=1}^n A_i^{-1}(\bm{0})$ (i.e., a fixed point of $T$ defined by \eqref{zero_t}) with the following convergence rate:
\begin{align*}
\min_{k \in [0:K-1]} \mathbb{E} \left[  \left\| \frac{1}{n} \sum_{i=1}^n A_i (\bm{x}_k)  \right\| \right] 
&= O \left( \frac{1}{\sqrt{K}} \right)\\
&\leq
\sqrt{ 
\frac{\| \bm{x}_{0} - \bm{x}^\star \|^2 
+ 
r^2 \beta^{-1} (2 \gamma - \beta)^{-1} \alpha B}{\alpha (1 - \alpha) \beta K}}.
\end{align*}

\textit{Example \ref{exp:1}}(iii) [Convex minimization problem] The sequence $(\bm{x}_k)$ generated for all $k \in \{0\} \cup \mathbb{N}$ by 
\begin{align*}
\bm{x}_{k+1} = \bm{x}_k - \frac{\alpha \eta}{b_k} \sum_{i=1}^{b_k} \nabla f_{\xi_{k,i}} (\bm{x}_k) 
\end{align*}
converges almost surely to a global minimizer of $f \coloneqq \frac{1}{n} \sum_{i=1}^n f_i$ (i.e., a fixed point of $T$ defined by \eqref{convex_t}) with the following convergence rate:
\begin{align*}
\min_{k \in [0:K-1]} \mathbb{E} \left[  \left\| \nabla f (\bm{x}_k)  \right\| \right] 
= O \left( \frac{1}{\sqrt{K}} \right)
\leq
\sqrt{ 
\frac{\| \bm{x}_{0} - \bm{x}^\star \|^2 
+ 
\eta^2 \sigma_g^2 \alpha B}{\alpha (1 - \alpha) \eta K}}.
\end{align*}
This result coincides with \cite[Theorem 3.2]{umeda2025increasing} showing that mini-batch SGD has an $O(1/\sqrt{K})$ convergence rate. 

\section{Conclusion and Future Work}
This paper considered a stochastic fixed point problem for nonexpansive mappings and presented a convergence analysis of the mini-batch stochastic Krasnosel'ski\u\i-Mann algorithm for solving it. The analysis showed that the algorithm using an increasing batch size converges almost surely to a fixed point of the expectation of stochastic nonexpansive mappings. This paper also presented a convergence rate analysis demonstrating that the use of a constant step size leads to faster convergence of the algorithm compared with the use of diminishing step sizes.

The Halpern algorithm is a useful fixed point algorithm for nonexpansive mappings. Hence, in the future, we should verify whether this algorithm, defined by 
\begin{align*}
\bm{x}_{k+1} = \alpha_k \bm{x}_0 + (1 - \alpha_k) T_{\bm{\xi}_k} (\bm{x}_k),
\end{align*}
can be applied to the stochastic fixed point problem. While this paper assumed the boundedness of the variance of a stochastic mapping (Assumption \ref{assum:1}(A2)), the previously reported results in \cite{pmlr-v97-simsekli19a,pmlr-v139-garg21b,pmlr-v238-battash24a,ahn2024linear} showed that stochastic noise in practical machine learning may exhibit heavy-tailed behavior, violating the bounded-variance assumption. Hence, we should also verify whether the mini-batch stochastic Krasnosel'ski\u\i-Mann algorithm converges under heavy-tailed noise.

\section*{Compliance with Ethical Standards}
The author declares that there is no conflict of interest. This article does not contain any studies with human participants or animals performed by the author.

\begin{acknowledgements}
This work was supported by JSPS KAKENHI Grant Number 24K14846. The author declares no conflict of interest.
\end{acknowledgements}

\bibliographystyle{spmpsci}
\bibliography{bib}
\end{document}